\def\ps@pprintTitle{%
	\let\@oddhead\@empty
	\let\@evenhead\@empty
	\let\@oddfoot\@empty
	\let\@evenfoot\@oddfoot
}
\renewcommand{\bar}{\overline}
\newcommand{\reals}{\mathbb{R}}
\newcommand{\comps}{\mathbb{C}}
\renewcommand{\Re}{\text{Re}}
\newcommand{\A}{\mathcal{A}}
\renewcommand{\H}{\mathcal{H}}
\newcommand{\B}{\mathcal{B}}
\newtheorem{theorem}{Theorem}
\newtheorem{corollary}[theorem]{Corollary}
\newtheorem{lemma}[theorem]{Lemma}
\newtheorem{prop}[theorem]{Proposition}
\newtheorem{definition}[theorem]{Definition}
\theoremstyle{definition}
\newtheorem{remark}[theorem]{Remark}
\numberwithin{theorem}{section}
\begin{document}
\begin{frontmatter}
\title{A short proof of Tomita's theorem}
\author[1]{Jonathan Sorce}
\ead{jsorce@mit.edu}
\affiliation[1]{organization={MIT Center for Theoretical Physics},
	addressline={77 Massachusetts Ave}, 
	city={Cambridge},
	postcode={02139}, 
	state={MA},
	country={USA}}
\tnotetext[t1]{MIT Preprint ID MIT-CTP/5621}
\begin{abstract}
Tomita-Takesaki theory associates a positive operator called the ``modular operator'' with a von Neumann algebra and a cyclic-separating vector. Tomita’s theorem says that the unitary flow generated by the modular operator leaves the algebra invariant. I give a new, short proof of this theorem which only uses the analytic structure of unitary flows, and which avoids operator-valued Fourier transforms (as in van Daele's proof) and operator-valued Mellin transforms (as in Zsid\'{o}'s and Woronowicz’s proofs). The proof is similar to one given by Bratteli and Robinson in the special case that the modular operator is bounded.
\end{abstract}
\end{frontmatter}

\section{Introduction}

\textit{\textbf{Note:} This version of the paper differs from the journal version in that proposition \ref{prop:density} has been changed to a stronger statement, which is what was actually needed for the proof of corollary \ref{cor:tomitas-theorem}.
The original proof established the stronger statement.
I thank Richard Pieters for pointing this out to me in an email.
}

Let $\H$ be a Hilbert space, $\A$ a von Neumann algebra, and $\A'$ its commutant.
A vector $\Omega \in \H$ is said to be cyclic and separating for $\A$ if the subspaces $\A \Omega$ and $\A' \Omega$ are both dense in $\H$.
Given such a vector, one may define an antilinear operator $S_0$ with domain $\A \Omega$ by
\begin{equation}
	S_0 (\mathrm{a} \Omega) = \mathrm{a}^* \Omega, \qquad \mathrm{a} \in \A.
\end{equation}
The first basic result of the Tomita-Takesaki theory developed in \cite{takesaki2006tomita} --- see also \cite{takesaki-book, struatilua2019lectures} for textbook treatments --- is that $S_0$ is a preclosed operator.
Its closure is denoted $S$ and is called the \textit{Tomita operator}.
The polar decomposition of $S$ is written
\begin{equation}
	S = J \Delta^{1/2}.
\end{equation}
$J$ is an antiunitary operator called the \textit{modular conjugation}, and $\Delta$ is an invertible, positive, self-adjoint operator called the \textit{modular operator}.
One can also show that the Tomita operator of $\A'$ for $\Omega$ is $S^{*},$ with polar decomposition
\begin{equation}
	S^* = J \Delta^{-1/2}.
\end{equation}

Since the modular operator is invertible, it generates a unitary group of operators $\Delta^{-it}.$
The map from $\B(\H) \times \mathbb{R}$ to $\B(\H)$ given by
\begin{equation}
	g(\mathrm{x}, t) = \Delta^{-it} \mathrm{x} \Delta^{it}
\end{equation}
is known as \textit{modular flow}.
The fundamental theorem of Tomita, which is the starting point for the rest of the Tomita-Takesaki theory, is that modular flow maps the algebra $\A$ to itself.
The first complete proof of this theorem was given by Takesaki in \cite{takesaki2006tomita}.
Other general proofs were given in \cite{van-daele-proof, zsido-proof, woronowicz-proof}, and a simplified proof in the case that $\A$ is hyperfinite was given in \cite{longo-proof}.

Tomita-Takesaki theory has been extremely useful for the study of von Neumann algebras, especially the ones of type III.
It is the basic tool underlying Connes' classification of type III factors \cite{connes1973classification}, Takesaki's duality theorem for crossed products \cite{takesaki1973duality}, and also several interesting developments in quantum field theory \cite{Borchers:2000pv}.

This paper presents a proof of Tomita's theorem that is, to the best of my knowledge, new to the literature.
It is similar to Zsid\'{o}'s proof from \cite{zsido-proof} in that it works by constructing a dense subset of $\A$ for which modular flow admits an entire analytic extension.
The main difference is that for the dense subset constructed here, the analytic extension of modular flow has norm bounded by an exponential function at infinity, so that Carlson's theorem can be used to constrain modular flow by evaluating the analytic extension on the integers in the complex plane.
This lets us show directly that all commutators of the form
\begin{equation}
	[\Delta^{-it} \mathrm{a} \Delta^{it}, \mathrm{b}']\qquad \mathrm{b}' \in \A'
\end{equation}
vanish, which implies via von Neumann's bicommutant theorem the desired result
\begin{equation}
	\Delta^{-it} \mathrm{a} \Delta^{it} \in \A.
\end{equation}
By contrast, Zsid\'{o}'s proof proceeds using the theory of analytic generators \cite{cioranescu1976analytic}, which requires studying certain Mellin transforms of operator-valued functions.
(See also \cite{woronowicz-proof, Zsido2012}.)
Note also that the idea of using Carlson's theorem to constrain modular flow has appeared previously in \cite{bratteli2012operator} in the special case that the modular operator $\Delta$ is bounded.

Section \ref{sec:background} states some results from previous work that go into the new proof of Tomita's theorem.
Proofs are not given, but sources are provided.
Section \ref{sec:proof} presents a proof of Tomita's theorem.
A longer version of this proof with different exposition is presented in the companion article \cite{sorce-long-proof} for an audience of physicists.

\section{Background material}
\label{sec:background}

\begin{remark}
	The first important lemma tells us how to think of the domain of the Tomita operator $S$.
	The domain of $S_0$ is expressed in terms of operators in $\A$ acting on $\Omega.$
	The following lemma tells us that the domain of $S$ can be expressed in terms of operators affiliated to $\A$ acting on $\Omega.$
	(Recall that a closed, unbounded operator $\mathrm{T}$ is said to be affiliated to $\A$ if it commutes with every operator $\mathrm{a}' \in \A'$ on all vectors where both $\mathrm{T} \mathrm{a}'$ and $\mathrm{a}' \mathrm{T}$ are defined.)
\end{remark}

\begin{lemma} \label{lem:modular-domain}
	Let $\Omega$ be a cyclic-separating vector for a von Neumann algebra $\A,$ and let $S$ be the Tomita operator.
	The domain of $S$, which is the same as the domain of $\Delta^{1/2},$ consists of all vectors of the form $\mathrm{T} \Omega,$ where $\mathrm{T}$ is a closed operator affiliated with $\A$, having $\A' \Omega$ as a core, and for which $\Omega$ is in the domain of both $\mathrm{T}$ and $\mathrm{T}^{*}.$
	The operator $S$ acts as
	\begin{equation}
		S (\mathrm{T} \Omega) = \mathrm{T}^{*} \Omega.
	\end{equation}
\end{lemma}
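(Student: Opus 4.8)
The plan is to prove the two set inclusions separately: that every vector of the stated form lies in $\operatorname{dom}(S)$ with $S(\mathrm{T}\Omega)=\mathrm{T}^{*}\Omega$, and conversely that every $\xi\in\operatorname{dom}(S)$ arises this way. I will use two facts from the background material: since $\Omega$ is cyclic for $\A$ it is separating for $\A'$, so $\mathrm{a}'\mapsto\mathrm{a}'\Omega$ is injective on $\A'$; and $S^{*}$ is the Tomita operator of $\A'$ for $\Omega$, i.e.\ the closure of the antilinear map $F_{0}(\mathrm{a}'\Omega)=(\mathrm{a}')^{*}\Omega$ with dense domain $\A'\Omega$, so that $\A'\Omega$ is a core for $S^{*}$. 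The workhorse will be the antilinear adjoint identity $\langle S\xi,\zeta\rangle=\langle S^{*}\zeta,\xi\rangle$, valid for $\xi\in\operatorname{dom}(S)$ and $\zeta\in\operatorname{dom}(S^{*})$, together with $S=(S^{*})^{*}$; these give the criterion that a vector $\zeta$ lies in $\operatorname{dom}(S)$ with $S\zeta=w$ if and only if $\langle S^{*}\eta,\zeta\rangle=\langle w,\eta\rangle$ for all $\eta$ in a core of $S^{*}$ (the forward implication uses $\langle S^{*}\eta,\zeta\rangle=\langle S\zeta,\eta\rangle$; the reverse extends the identity to all of $\operatorname{dom}(S^{*})$ by approximation and then reads off that $\zeta\in\operatorname{dom}((S^{*})^{*})=\operatorname{dom}(S)$).

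For the inclusion ``$\supseteq$'' I would take $\mathrm{T}$ closed and affiliated with $\A$, having $\A'\Omega$ as a core, with $\Omega\in\operatorname{dom}(\mathrm{T})\cap\operatorname{dom}(\mathrm{T}^{*})$, and verify the criterion with $\zeta=\mathrm{T}\Omega$ and $w=\mathrm{T}^{*}\Omega$. Since $\A'\Omega\subseteq\operatorname{dom}(\mathrm{T})$, affiliation gives $\mathrm{T}(\mathrm{b}'\Omega)=\mathrm{b}'\mathrm{T}\Omega$ for each $\mathrm{b}'\in\A'$, and then for $\eta=\mathrm{b}'\Omega$ one computes
\begin{equation*}
	\langle S^{*}(\mathrm{b}'\Omega),\mathrm{T}\Omega\rangle=\langle(\mathrm{b}')^{*}\Omega,\mathrm{T}\Omega\rangle=\langle\Omega,\mathrm{b}'\mathrm{T}\Omega\rangle=\langle\Omega,\mathrm{T}(\mathrm{b}'\Omega)\rangle=\langle\mathrm{T}^{*}\Omega,\mathrm{b}'\Omega\rangle,
\end{equation*}
using $S^{*}(\mathrm{b}'\Omega)=(\mathrm{b}')^{*}\Omega$, then affiliation, then $\Omega\in\operatorname{dom}(\mathrm{T}^{*})$; this is exactly the criterion on the core $\A'\Omega$, so $\mathrm{T}\Omega\in\operatorname{dom}(S)$ and $S(\mathrm{T}\Omega)=\mathrm{T}^{*}\Omega$.

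For the inclusion ``$\subseteq$'', given $\xi\in\operatorname{dom}(S)$ I would set $\eta=S\xi$ and define a linear operator $\mathrm{T}_{0}$ with domain $\A'\Omega$ by $\mathrm{T}_{0}(\mathrm{a}'\Omega)=\mathrm{a}'\xi$, which is well defined because $\Omega$ separates $\A'$. The crucial point is that $\mathrm{T}_{0}$ is closable; I would obtain this by showing that the densely defined map sending $\mathrm{b}'\Omega$ to $\mathrm{b}'\eta$ is contained in $\mathrm{T}_{0}^{*}$, so that $\mathrm{T}_{0}^{*}$ is densely defined. Indeed, writing $\mathrm{c}'=(\mathrm{b}')^{*}\mathrm{a}'\in\A'$, one has $\langle\mathrm{T}_{0}(\mathrm{a}'\Omega),\mathrm{b}'\Omega\rangle=\langle\mathrm{c}'\xi,\Omega\rangle$ and $\langle\mathrm{a}'\Omega,\mathrm{b}'\eta\rangle=\langle\mathrm{c}'\Omega,\eta\rangle$, and these agree since
\begin{equation*}
	\langle\mathrm{c}'\xi,\Omega\rangle=\overline{\langle(\mathrm{c}')^{*}\Omega,\xi\rangle}=\overline{\langle S^{*}(\mathrm{c}'\Omega),\xi\rangle}=\overline{\langle S\xi,\mathrm{c}'\Omega\rangle}=\langle\mathrm{c}'\Omega,\eta\rangle.
\end{equation*}
Then $\mathrm{T}:=\overline{\mathrm{T}_{0}}$ is the desired operator: $\A'\Omega$ is a core for it by construction; it is affiliated with $\A$ because the graph of $\mathrm{T}_{0}$ is invariant under left multiplication by elements of $\A'$, and this invariance passes to the closure; taking $\mathrm{a}'$ equal to the identity gives $\Omega\in\operatorname{dom}(\mathrm{T})$ with $\mathrm{T}\Omega=\xi$; and since $\mathrm{T}^{*}=\mathrm{T}_{0}^{*}$ contains the map $\mathrm{b}'\Omega\mapsto\mathrm{b}'\eta$, we get $\Omega\in\operatorname{dom}(\mathrm{T}^{*})$ with $\mathrm{T}^{*}\Omega=\eta=S\xi$. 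This realizes $\xi$ in the stated form and, combined with the previous paragraph, also fixes the action of $S$.

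I expect the argument to be largely bookkeeping, so the points that need care are the manipulations with antilinear adjoints --- the conjugations and the identity $\langle S\xi,\zeta\rangle=\langle S^{*}\zeta,\xi\rangle$ --- and the observation that the weak notion of affiliation from the preceding Remark suffices: in both directions the vectors to which affiliation is applied already lie in $\operatorname{dom}(\mathrm{T})$ (by the core hypothesis in the first direction, by construction in the second), so no strengthening of the affiliation condition is needed. The one genuinely nontrivial ingredient is the quoted identification of $S^{*}$ with the Tomita operator of $\A'$; everything else is manipulation of closures, adjoints, and the invariance of $\A'\Omega$ under $\A'$.
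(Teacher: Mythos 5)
Your proof is correct. Note, though, that the paper does not prove this lemma at all --- it is quoted as background, with the proof deferred to Takesaki's book (lemma 1.10) and Jones's notes (theorem 13.1.3) --- so the comparison is really with those sources, and your argument is essentially the standard one found there: for ``$\subseteq$'' you define $\mathrm{T}_0(\mathrm{a}'\Omega)=\mathrm{a}'\xi$ on $\A'\Omega$, use the identification of $S^{*}$ with the Tomita operator of $\A'$ (which the paper also states as background, so invoking it is legitimate, and you are right that it is the genuinely nontrivial input --- it carries the content of the preclosedness theorem) to show $\mathrm{T}_0^{*}\supseteq(\mathrm{b}'\Omega\mapsto\mathrm{b}'S\xi)$ and hence closability, and then read off affiliation and the values at $\Omega$; for ``$\supseteq$'' you verify the adjoint criterion $\langle S^{*}(\mathrm{b}'\Omega),\mathrm{T}\Omega\rangle=\langle\mathrm{T}^{*}\Omega,\mathrm{b}'\Omega\rangle$ on the core $\A'\Omega$ of $S^{*}$ and use $S=(S^{*})^{*}$. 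Your handling of the antilinear adjoint identity and of the paper's weak affiliation convention checks out (in the converse direction your construction even yields the stronger relation $\mathrm{a}'\mathrm{T}\subseteq\mathrm{T}\mathrm{a}'$, which a fortiori satisfies the Remark's definition). Two cosmetic omissions: the clause $\operatorname{dom}(S)=\operatorname{dom}(\Delta^{1/2})$ is immediate from the polar decomposition $S=J\Delta^{1/2}$ with $J$ antiunitary and deserves a sentence, and well-definedness of $\mathrm{T}_0$ uses that cyclicity of $\Omega$ for $\A$ makes it separating for $\A'$, which you do state. No gaps beyond that.
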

\begin{proof}
	See e.g. \cite[lemma 1.10]{takesaki-book} or \cite[theorem 13.1.3]{jones2003neumann}.
\end{proof}

\begin{remark}
	The next lemma tells us when the unitary group generated by a positive operator, thought of as a flow acting on bounded operators by conjugation, can be analytically continued away from the imaginary axis in the complex plane.
\end{remark}

\begin{lemma} \label{lem:operator-continuation}
	Let $P$ be a positive, invertible, self-adjoint operator on the Hilbert space $\H.$
	Let $w$ be a complex number with $\Re(w) > 0.$
	Fix a bounded operator $\mathrm{x}.$
	
	If the operator $P^{-w} \mathrm{x} P^{w}$ is defined and bounded on a core for $P^{w}$, then for every $z$ in the strip $0 \leq \Re(z) \leq \Re(w),$ the operator $P^{-z} \mathrm{x} P^{z}$ is bounded on its domain, so that it is preclosed with bounded closure.
	The bounded-operator-valued function
	\begin{equation}
		z \mapsto \bar{P^{-z} \mathrm{x} P^{z}}
	\end{equation}
	is analytic in the strip with respect to the norm topology, and continuous on the boundaries of the strip with respect to the strong operator topology.
\end{lemma}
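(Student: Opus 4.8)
The plan is to reduce the whole statement to a scalar Phragm\'{e}n--Lindel\"{o}f argument on matrix elements, using the spectral calculus of $P$ to produce a convenient dense space of entire analytic vectors. Write $P = \int_{0}^{\infty} \lambda \, dE(\lambda)$ and set $\mathcal{E} = \bigcup_{n \geq 1} E([1/n,n])\H$. Every vector in $\mathcal{E}$ lies in the domain of $P^{\zeta}$ for all $\zeta \in \comps$; the map $\zeta \mapsto P^{\zeta}\xi$ is entire for $\xi \in \mathcal{E}$; $\mathcal{E}$ is a core for every $P^{\zeta}$; and $\mathcal{E}$ is invariant under the unitaries $P^{is}$. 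For $\xi, \eta \in \mathcal{E}$ I would study $F_{\eta,\xi}(z) = \langle P^{-\bar{z}}\eta , \mathrm{x} P^{z}\xi \rangle$. Since $z \mapsto P^{-\bar{z}}\eta$ is antiholomorphic and $z \mapsto P^{z}\xi$ is holomorphic, $F_{\eta,\xi}$ is entire, and if $\xi,\eta \in E([1/n,n])\H$ then Cauchy--Schwarz gives the crude bound $|F_{\eta,\xi}(z)| \leq n^{2\Re(w)} \| \mathrm{x} \| \, \| \eta \| \, \| \xi \|$ throughout the strip, which is enough to forbid wild growth.

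Next I would prove the two boundary estimates. On $\Re(z) = 0$ the unitarity of $P^{it}$ gives $|F_{\eta,\xi}(it)| \leq \| \mathrm{x}\| \, \|\eta\| \, \|\xi\|$. On $\Re(z) = \Re(w)$, write $z = w + is$; using $(P^{-w})^{*} = P^{-\bar{w}}$ together with the hypothesis that the given core $\mathcal{C}$ for $P^{w}$ satisfies $\mathcal{C} \subseteq D(P^{-w}\mathrm{x}P^{w})$ with $\| P^{-w}\mathrm{x}P^{w}\| \leq M$ on $\mathcal{C}$, one approximates $P^{is}\xi \in \mathcal{E} \subseteq D(P^{w})$ by elements of $\mathcal{C}$ --- this is where the $P^{is}$-invariance of $\mathcal{E}$ is used, to reduce the line $\Re(z) = \Re(w)$ to the single point $z = w$ --- and obtains $|F_{\eta,\xi}(w+is)| \leq M \|\eta\| \, \|\xi\|$. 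Hadamard's three-lines theorem then yields $|F_{\eta,\xi}(z)| \leq \| \mathrm{x}\|^{1-\theta} M^{\theta} \|\eta\| \, \|\xi\|$ with $\theta = \Re(z)/\Re(w)$, hence the uniform bound $|F_{\eta,\xi}(z)| \leq C \|\eta\| \, \|\xi\|$ on the strip, $C = \max(\| \mathrm{x}\|, M)$. For each fixed $z$ the form $(\eta,\xi) \mapsto F_{\eta,\xi}(z)$ extends by density to a bounded sesquilinear form on $\H$, so by the Riesz lemma $F_{\eta,\xi}(z) = \langle \eta, X_{z}\xi\rangle$ for a unique $X_{z} \in \B(\H)$ with $\|X_{z}\| \leq C$. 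Using that $\mathcal{E}$ is a core for $(P^{-z})^{*} = P^{-\bar{z}}$, one checks that $\mathcal{E} \subseteq D(P^{-z}\mathrm{x}P^{z})$ with $P^{-z}\mathrm{x}P^{z}$ agreeing with $X_{z}$ there, and the approximation $\xi_{n} = E([1/n,n])\xi$ gives $P^{-z}\mathrm{x}P^{z} \subseteq X_{z}$ on the whole of its domain; since $\mathcal{E}$ is dense, $\bar{P^{-z}\mathrm{x}P^{z}} = X_{z}$, which proves the boundedness assertion.

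Norm-analyticity in the open strip I would obtain by transferring Cauchy's estimates from the matrix elements to the operator norm: each $F_{\eta,\xi}$ is holomorphic and bounded by $C\|\eta\| \, \|\xi\|$, so on a disc $\overline{D(z_{0},2r)}$ inside the strip the difference-quotient remainder identity gives a scalar bound of order $|z - z_{0}|$ with constant $C\|\eta\| \, \|\xi\|/r^{2}$, uniformly in $\eta,\xi$; the relevant contour integral defines, via Riesz again, a bounded operator $Y_{z_{0}}$, and taking the supremum over unit vectors shows $(X_{z} - X_{z_{0}})/(z - z_{0}) \to Y_{z_{0}}$ in norm. For strong continuity on the two boundary lines, $X_{it} = P^{-it}\mathrm{x}P^{it}$ literally and $X_{w+is} = P^{-is}X_{w}P^{is}$ follows from the definition of $X_{z}$ and the $P^{is}$-covariance, so both are SOT-continuous by Stone's theorem; for continuity as $z$ approaches a boundary point from inside the strip, one first gets weak-operator convergence from the uniform bound $\|X_{z}\| \leq C$ together with a closedness argument ($P^{-z} = P^{-\Re(z)}P^{-i\Im(z)}$ with $P^{-\Re(z)} \to I$ strongly on $\mathcal{E}$, and the graph of each $P^{-z}$ is weakly closed), and then upgrades it by noting that for $\xi \in \mathcal{E}$ the bounded analytic $\H$-valued function $z \mapsto X_{z}\xi$ has norm-continuous boundary data, hence coincides with the Poisson integral of that data (their difference is a bounded harmonic $\H$-valued function with vanishing weak boundary limits) and is therefore norm-continuous on the closed strip; the general $\xi$ follows once more from the uniform bound.

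I expect the main obstacle to be the domain-and-core bookkeeping --- extracting the boundary estimate on the whole line $\Re(z) = \Re(w)$ from a hypothesis that only directly controls $z = w$, and cleanly identifying $\bar{P^{-z}\mathrm{x}P^{z}}$ with the Riesz operator $X_{z}$ --- together with the passage from weak to strong continuity at the boundary. Once these points are pinned down, the three-lines bound and the step from scalar to norm analyticity are routine.
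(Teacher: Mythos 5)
Note first that the paper does not actually prove Lemma \ref{lem:operator-continuation}: it is stated as background material with a pointer to \cite[section 9.24]{struatilua2019lectures}, so there is no in-paper argument to compare against line by line. Your proposal is, in substance, a correct reconstruction of the standard argument behind that citation: truncate spectrally to get the dense, $P^{is}$-invariant space $\mathcal{E}$ of entire vectors which is a core for every $P^{\zeta}$; extend the hypothesis from the given core $\mathcal{C}$ to all of $D(P^{w})$ by graph-norm approximation (this, together with $(P^{-w})^{*}=P^{-\bar{w}}$ and commutation with $P^{is}$, correctly yields the estimate on the whole line $\Re(z)=\Re(w)$, which is indeed the one genuinely delicate bookkeeping step); run the three-lines theorem on the entire, bounded matrix functions $F_{\eta,\xi}$; recover $X_{z}$ by Riesz representation; and identify $\bar{P^{-z}\mathrm{x}P^{z}}=X_{z}$ via the adjoint criterion on the core $\mathcal{E}$ for $P^{-\bar{z}}$ plus closedness of $P^{-z}$ under the truncations $E([1/n,n])\xi$ --- all of these steps check out. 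The passage from weak holomorphy on a dense set plus the uniform bound $\lVert X_{z}\rVert\leq C$ to norm analyticity is the standard Cauchy-estimate argument and is fine, and your treatment of boundary continuity (SOT continuity along each boundary line from $X_{it}=P^{-it}\mathrm{x}P^{it}$ and $X_{w+is}=P^{-is}X_{w}P^{is}$, then norm continuity of $z\mapsto X_{z}\xi$ up to the boundary for $\xi\in\mathcal{E}$ via the vector-valued Poisson integral and a Phragm\'{e}n--Lindel\"{o}f uniqueness argument, then density) is correct, though it is the one place where the sketch leans on nontrivial standard facts that would need to be written out carefully if this were to replace the citation. In short: correct, and essentially the same route as the standard proof the paper defers to, not a genuinely new one.
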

\begin{proof}
	See e.g. \cite[section 9.24]{struatilua2019lectures}.
\end{proof}

\begin{remark}
	The next lemma is due to Takesaki, and appears in all general proofs of Tomita's theorem except the one by Woronowicz.
	The reason for needing this lemma in the proof of section \ref{sec:proof} is that we will want to study a special class of operators $\mathrm{a} \in \A$ for which the modular operator $\Delta$ ``looks bounded.''
	Concretely, this will mean that the vector $\mathrm{a} \Omega$ lies in a spectral subspace of $\Delta$ with bounded spectral range. 
	To produce such vectors, we will start with other vectors in $\H$ and act on them with ``mollifying operators'' that truncate the large-spectral-value spectral subspaces of $\Delta$.
	Takesaki's lemma tells us what happens when the resolvent of the modular operator is used as a mollifier; other mollifying operators can be constructed from the resolvent using contour integrals.
\end{remark}

\begin{lemma} \label{lem:resolvent-lemma}
	Let $\Omega$ be a cyclic-separating vector for a von Neumann algebra $\A$ with commutant $\A'.$
	Let $\Delta$ be the associated modular operator.
	Let $z$ be in the resolvent set of $\Delta,$ so that $(z-\Delta)$ is invertible as a bounded operator.
	Fix $\mathrm{a}' \in \A'.$
	
	Then there exists a unique operator $\mathrm{a} \in \A$ satisfying
	\begin{equation}
		\mathrm{a} \Omega = (z - \Delta)^{-1} \mathrm{a}' \Omega,
	\end{equation}
	and it satisfies the bound
	\begin{equation}
		\lVert \mathrm{a} \rVert \leq \frac{\lVert \mathrm{a}' \rVert}{\sqrt{2 (|z| - \Re(z))}}.
	\end{equation}
\end{lemma}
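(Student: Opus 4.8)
This is Takesaki's lemma; it is due to Takesaki \cite{takesaki2006tomita}, and a self-contained treatment may simply cite it (see also \cite{takesaki-book, struatilua2019lectures}). The plan I would follow is as follows. Write $\xi:=(z-\Delta)^{-1}\mathrm{a}'\Omega$ and $C:=\lVert\mathrm{a}'\rVert/\sqrt{2(|z|-\Re(z))}$, with $\mathrm{a}'\in\A'$. Everything reduces to the single estimate
\begin{equation}
\lVert\mathrm{b}'\xi\rVert\le C\,\lVert\mathrm{b}'\Omega\rVert\qquad(\mathrm{b}'\in\A').\label{eq:tak-key}
\end{equation}
Granting \eqref{eq:tak-key}, the assignment $\mathrm{b}'\Omega\mapsto\mathrm{b}'\xi$ is a well defined (if $\mathrm{b}'\Omega=0$ then $\mathrm{b}'\xi=0$ by \eqref{eq:tak-key}) bounded linear map on the dense subspace $\A'\Omega$, so it extends to an operator $\mathrm{a}\in\B(\H)$ with $\lVert\mathrm{a}\rVert\le C$; a one-line check on $\A'\Omega$ shows $\mathrm{a}$ commutes with $\A'$, whence $\mathrm{a}\in\A''=\A$ by von Neumann's bicommutant theorem; and $\mathrm{a}\Omega=\xi$. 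Uniqueness holds because $\Omega$ is separating for $\A$. Thus the entire content of the lemma is \eqref{eq:tak-key}.

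To attack \eqref{eq:tak-key} I would invoke Lemma \ref{lem:modular-domain}. A resolvent of a self-adjoint operator maps $\H$ into that operator's domain, so $\xi\in D(\Delta)\subseteq D(\Delta^{1/2})=D(S)$, and Lemma \ref{lem:modular-domain} then realizes $\xi=\mathrm{T}\Omega$ for a closed operator $\mathrm{T}$ affiliated with $\A$ having $\A'\Omega$ as a core, with $S\xi=\mathrm{T}^{*}\Omega$. By affiliation, $\mathrm{b}'\xi=\mathrm{b}'\mathrm{T}\Omega=\mathrm{T}\mathrm{b}'\Omega$ for every $\mathrm{b}'\in\A'$, and since $\A'\Omega$ is a core for $\mathrm{T}$, estimate \eqref{eq:tak-key} is equivalent to the assertion that $\mathrm{T}$ is a bounded operator with $\lVert\mathrm{T}\rVert\le C$. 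The constant reflects the elementary bound $\lVert\Delta^{1/2}(z-\Delta)^{-1}\rVert\le\bigl(2(|z|-\Re(z))\bigr)^{-1/2}$, which holds because $\mu\mapsto\mu/|z-\mu|^{2}$ is maximized on $(0,\infty)$ at $\mu=|z|$, with value $\bigl(2(|z|-\Re(z))\bigr)^{-1}$. Since $S\xi=J\Delta^{1/2}\xi=J\bigl[\Delta^{1/2}(z-\Delta)^{-1}\bigr]\mathrm{a}'\Omega$ and $J$ is isometric, this already yields control of $\mathrm{T}$ at the single vector $\Omega$: $\lVert\mathrm{T}^{*}\Omega\rVert\le C\lVert\Omega\rVert$, and $\lVert\mathrm{T}\Omega\rVert=\lVert\xi\rVert$ is finite.

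The step I expect to be the real obstacle is upgrading this control at $\Omega$ to a bound on $\mathrm{T}$ as an operator --- control of $\mathrm{T}\Omega$ and $\mathrm{T}^{*}\Omega$ alone does not force $\mathrm{T}$ to be bounded, so the modular structure must be used essentially here. The mechanism is to bring in the crossover relations $\Delta^{1/2}(\mathrm{c}\Omega)=J\mathrm{c}^{*}\Omega$ for $\mathrm{c}\in\A$ and $\Delta^{-1/2}(\mathrm{c}'\Omega)=J\mathrm{c}^{\prime *}\Omega$ for $\mathrm{c}'\in\A'$, together with $J\A J=\A'$ and $J\Omega=\Omega$. Writing $\mathrm{a}'\Omega=\Delta^{1/2}\mathrm{a}_{1}\Omega$ with $\mathrm{a}_{1}:=J\mathrm{a}^{\prime *}J\in\A$ and $\lVert\mathrm{a}_{1}\rVert=\lVert\mathrm{a}'\rVert$, one gets $\mathrm{b}'\xi=\mathrm{b}'G\mathrm{a}_{1}\Omega$ with $G:=\Delta^{1/2}(z-\Delta)^{-1}$, so that \eqref{eq:tak-key} amounts to the sharpened Cauchy--Schwarz-type inequality $\lVert\mathrm{b}'G\mathrm{a}_{1}\Omega\rVert\le\lVert G\rVert\,\lVert\mathrm{a}_{1}\rVert\,\lVert\mathrm{b}'\Omega\rVert$; that is, in this particular expression the trivial factor $\lVert\mathrm{b}'\rVert\,\lVert\Omega\rVert$ can be improved to $\lVert\mathrm{b}'\Omega\rVert$. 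I would finish by Takesaki's device of applying Lemma \ref{lem:modular-domain} a second time, now to the vector $S\xi$ --- which after the same substitution is again of the form ``bounded function of $\Delta$, applied to $\mathrm{a}^{\prime *}\Omega$'', hence lies in $D(S)$ --- so that the affiliation of $\mathrm{T}^{*}$ is also available, and then closing the estimate with a Cauchy--Schwarz argument that plays the two bounds obtained at $\Omega$ against each other. For a short paper this last computation is exactly the one in the references above and may simply be invoked.
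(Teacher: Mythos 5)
You should first note that the paper itself gives no proof of this lemma: it sits in section \ref{sec:background}, whose preamble says proofs of background results are omitted, and the surrounding remark attributes it to Takesaki. So your leading suggestion --- simply cite it --- coincides with the paper's own treatment, and your reading of the hypothesis as $\mathrm{a}'\in\A'$ (the statement's ``$\mathrm{a}'\in\A$'' is evidently a typo, as the use of the lemma in theorem \ref{thm:bounded-construction} shows) is correct. Your reduction of the lemma to the single estimate $\lVert\mathrm{b}'\xi\rVert\le C\lVert\mathrm{b}'\Omega\rVert$ for all $\mathrm{b}'\in\A'$, the passage from that estimate to a bounded $\mathrm{a}\in\A''=\A$ with $\mathrm{a}\Omega=\xi$ and $\lVert\mathrm{a}\rVert\le C$, uniqueness because $\Omega$ is separating, and the scalar bound $\sup_{\mu>0}\sqrt{\mu}/\lvert z-\mu\rvert=\bigl(2(\lvert z\rvert-\Re(z))\bigr)^{-1/2}$ are all fine.

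If, however, the sketch is meant to stand as a proof, it has a genuine gap and one illegitimate step. The gap: the only nontrivial content of the lemma is exactly the estimate $\lVert\mathrm{b}'\xi\rVert\le C\lVert\mathrm{b}'\Omega\rVert$, and you explicitly defer it to the references; nothing in your outline closes it, and control of $\mathrm{T}\Omega$ and $\mathrm{T}^*\Omega$ alone does not, as you yourself note, bound $\mathrm{T}$. The illegitimate step: you set $\mathrm{a}_1:=J\mathrm{a}'^{*}J$ and assert $\mathrm{a}_1\in\A$, i.e.\ $J\A'J\subseteq\A$. That is the conjugation half of Tomita's theorem, which in the standard developments is proved alongside the flow statement and using this very lemma; it is not available here, and invoking it makes the argument circular within the logical structure the paper relies on. (The vector identity $\Delta^{-1/2}\mathrm{a}'\Omega=J\mathrm{a}'^{*}\Omega$ is harmless, since it uses only $S^{*}=J\Delta^{-1/2}$ and the fact that $S^{*}$ is the Tomita operator of $\A'$; what you may not assume is that $J\mathrm{a}'^{*}J$ lies in $\A$.) The classical proofs (Takesaki's original argument; Bratteli--Robinson) obtain the balanced form bound $\lvert\langle\mathrm{c}'\Omega,\mathrm{b}'\xi\rangle\rvert\le C\,\lVert\mathrm{b}'\Omega\rVert\,\lVert\mathrm{c}'\Omega\rVert$ for $\mathrm{b}',\mathrm{c}'\in\A'$ by manipulating $S$, $S^{*}$ and $\Delta=S^{*}S$ together with the scalar bound above, never using $J\A J=\A'$; if you want a self-contained treatment rather than a citation, that is the computation you must actually reproduce.
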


\begin{remark}
	The final lemma we will need lets us express analytic functions of bounded operators as residue integrals.
\end{remark}
\begin{lemma}\label{lem:residue-lemma}
	Let $\mathrm{x}$ be a bounded, self-adjoint operator, and let $f$ be a function analytic in a neighborhood of the spectrum of $\mathrm{x}.$
	Then the operator $f(\mathrm{x})$, defined via the functional calculus using the spectral theorem, can be written in terms of the norm-convergent Bochner integral
	\begin{equation}
		f(\mathrm{x})
			= \frac{1}{2 \pi i} \int_{\gamma} dz\, f(z) (z - \mathrm{x})^{-1},
	\end{equation}
	where $\gamma$ is any simple, counterclockwise, closed contour in the domain of $f$ encircling the spectrum of $\mathrm{x}.$
\end{lemma}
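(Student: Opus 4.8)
The plan is to reduce the identity to the ordinary Cauchy integral formula by inserting the spectral resolution of $\mathrm{x}$. First I would check that the integral is well posed: since the simple closed contour $\gamma$ avoids $\sigma(\mathrm{x})$ it lies in the resolvent set, on which $z \mapsto (z-\mathrm{x})^{-1}$ is norm-continuous (indeed norm-analytic); hence $z \mapsto f(z)(z-\mathrm{x})^{-1}$ is a norm-continuous $\B(\H)$-valued function on the compact set $\gamma$, and its Bochner integral converges in norm. Write $\Phi(f)$ for this integral; then $\lVert \Phi(f)\rVert \leq \frac{1}{2\pi}\, \ell(\gamma)\, \sup_{z \in \gamma}\lvert f(z)\rvert\, \sup_{z \in \gamma}\lVert (z-\mathrm{x})^{-1}\rVert$. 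Next, by the spectral theorem write $\mathrm{x} = \int_{\sigma(\mathrm{x})}\lambda\, dE(\lambda)$ for a projection-valued measure $E$; for each fixed $z \in \gamma$ the function $\lambda \mapsto (z-\lambda)^{-1}$ is continuous on $\sigma(\mathrm{x})$, and the functional calculus gives $(z-\mathrm{x})^{-1} = \int_{\sigma(\mathrm{x})}(z-\lambda)^{-1}\, dE(\lambda)$.

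The heart of the argument is the interchange of the contour integral over $\gamma$ with the spectral integral over $\sigma(\mathrm{x})$, after which the conclusion is immediate:
\[
	\Phi(f) = \int_{\sigma(\mathrm{x})} \left[ \frac{1}{2\pi i}\int_\gamma \frac{f(z)}{z-\lambda}\, dz \right] dE(\lambda) = \int_{\sigma(\mathrm{x})} f(\lambda)\, dE(\lambda) = f(\mathrm{x}),
\]
where the middle step is the Cauchy integral formula (each $\lambda \in \sigma(\mathrm{x})$ is encircled exactly once by $\gamma$, which sits inside the region where $f$ is analytic) and the last step is the definition of $f(\mathrm{x})$. To make the interchange rigorous I would pass to scalar integrals: for $\psi, \phi \in \H$ let $\mu_{\psi,\phi} = \langle \psi, E(\cdot)\phi\rangle$, a finite complex measure on $\sigma(\mathrm{x})$, so that $\langle \psi, (z-\mathrm{x})^{-1}\phi\rangle = \int_{\sigma(\mathrm{x})}(z-\lambda)^{-1}\, d\mu_{\psi,\phi}(\lambda)$. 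Because $(z,\lambda) \mapsto f(z)/(z-\lambda)$ is jointly continuous and bounded on the compact set $\gamma \times \sigma(\mathrm{x})$, while both the arclength measure on $\gamma$ and $\lvert \mu_{\psi,\phi}\rvert$ are finite, the classical Fubini theorem applies and gives $\langle \psi, \Phi(f)\phi\rangle = \int_{\sigma(\mathrm{x})} f(\lambda)\, d\mu_{\psi,\phi}(\lambda) = \langle \psi, f(\mathrm{x})\phi\rangle$. As $\psi, \phi$ are arbitrary this yields $\Phi(f) = f(\mathrm{x})$; in particular the integral does not depend on the admissible choice of $\gamma$, as asserted.

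The one genuine obstacle is precisely this interchange of an operator-valued contour integral with a projection-valued spectral integral; the scalar reduction above removes the need for any operator-valued Fubini theorem, at the cost of one extra layer of pairing with vectors, and everything else is routine. An alternative route that I would mention but not pursue avoids the spectral theorem altogether: using the resolvent identity and Fubini one checks directly that $f \mapsto \Phi(f)$ is a unital algebra homomorphism with $\Phi(z \mapsto z) = \mathrm{x}$, hence agreeing with $p(\mathrm{x})$ for every polynomial $p$; one then approximates $f$ uniformly on a neighborhood of $\sigma(\mathrm{x})$ by polynomials (Runge's theorem, applicable because the complement of the compact set $\sigma(\mathrm{x}) \subset \reals$ in the Riemann sphere is connected) and passes to the limit using the norm bound above together with the uniqueness of the continuous functional calculus on $C(\sigma(\mathrm{x}))$. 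This is more self-contained but longer, so I would present the spectral-theorem argument.
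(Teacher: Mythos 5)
The paper itself does not prove this lemma at all --- it simply cites a textbook for the standard Riesz--Dunford functional calculus --- so there is no internal proof to compare against; what you have supplied is the standard consistency argument between the holomorphic (contour-integral) functional calculus and the spectral-theorem functional calculus, and it is essentially correct. Your scalarization is exactly the right way to avoid an operator-valued Fubini theorem: pairing with $\psi,\phi$, writing $\langle\psi,(z-\mathrm{x})^{-1}\phi\rangle=\int_{\sigma(\mathrm{x})}(z-\lambda)^{-1}\,d\mu_{\psi,\phi}(\lambda)$, and applying classical Fubini on the compact product $\gamma\times\sigma(\mathrm{x})$, where boundedness of the integrand follows from $\operatorname{dist}(\gamma,\sigma(\mathrm{x}))>0$; the well-posedness and norm bound for the Bochner integral are also fine. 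One point deserves sharper wording: at the Cauchy-formula step you need $f$ to be holomorphic on the entire region enclosed by $\gamma$ (equivalently, $\gamma$ must have winding number zero about every point outside the domain of $f$), not merely on a neighborhood of $\sigma(\mathrm{x})$ and on $\gamma$ itself. If $f$ had a singularity inside $\gamma$ but away from the spectrum, the inner integral $\frac{1}{2\pi i}\int_\gamma f(z)(z-\lambda)^{-1}\,dz$ would pick up an extra residue and the identity would fail (scalar example: $\mathrm{x}=0$, $f(z)=(z-i)^{-1}$, $\gamma$ the circle of radius $2$). This looseness is already present in the lemma's statement as printed, and your parenthetical ``$\gamma$ sits inside the region where $f$ is analytic'' blurs it; under the intended reading (analyticity on a neighborhood of the closed region bounded by $\gamma$, or the homology form of Cauchy's theorem) your argument is complete. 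The alternative route you sketch (showing $f\mapsto\Phi(f)$ is a unital homomorphism sending the identity function to $\mathrm{x}$, then approximating via Runge) would also work but, as you say, is longer and buys nothing here.
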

\begin{proof}
	See e.g. \cite[sections 2.25 and 2.29]{struatilua2019lectures}.
\end{proof}

\section{A proof of Tomita's theorem}
\label{sec:proof}

Let $\Theta$ be the Heaviside theta function, defined by
\begin{equation}
	\Theta(x)
		=
		\begin{cases}
				1 & x > 0 \\
				\frac{1}{2} & x = 0 \\
				0 & x < 0
		\end{cases}.
\end{equation}
The main idea of this section is to produce operators in $\A$ for which the modular operator ``looks bounded'' by starting with a vector $\mathrm{a}' \Omega$ and acting on it with the operator $\Theta(\lambda - \Delta)$ for some $\lambda > 0.$
We will be able to study the vector $\Theta(\lambda - \Delta) \mathrm{a}'\Omega$ by approximating the function $\Theta(\lambda - x)$ with a sequence of sigmoid functions,
\begin{equation}
	f_k(x)
		= \frac{1}{1+e^{k (x - \lambda)}}.
\end{equation}
Since $f_k(z)$ is analytic in the complex plane, the operator $f_k(\Delta)$ (defined via the functional calculus from the spectral theorem) can be studied using a contour integral of $f_k(z)$ multiplied by the resolvent $(z - \Delta)^{-1}.$
Once the resolvent has entered our construction, we will be able to apply lemma \ref{lem:resolvent-lemma}.

\begin{figure}
	\centering
	\includegraphics{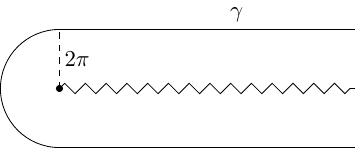}
	\caption{A sketch of the contour used in proposition \ref{prop:contour-prop} and theorem \ref{thm:bounded-construction}.
	The black dot denotes the origin of the complex plane, and the jagged line is the positive real axis.}
	\label{fig:specific-contour}
\end{figure}

\begin{prop} \label{prop:contour-prop}
	Fix $\lambda > 0,$ and let $f_k : \comps \to \comps$ be the sigmoid function
	\begin{equation}
		f_k(z)
			= \frac{1}{1 + e^{k(z-\lambda)}}. 
	\end{equation}
	Let $\Delta$ be an invertible, self-adjoint, positive operator. 
	Let $\gamma$ be the counterclockwise contour in the complex plane surrounding the positive real axis, given by combining the half-lines $\{t \pm 2 \pi i, \quad t \geq 0\}$ with the half-circle of radius $2 \pi i$ centered at the origin.
	(See figure \ref{fig:specific-contour}.)
	
	Then for any nonnegative integer $n,$ and any $\psi \in \H,$ we have
	\begin{equation}
		\Delta^n f_k(\Delta) \psi
			= \frac{1}{2 \pi i} \int_{\gamma} dz\, z^n f_k(z) (z - \Delta)^{-1} \psi,
	\end{equation}
	where this integral converges in the sense of the Bochner integral on Hilbert space.
\end{prop}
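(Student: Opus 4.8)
The plan is to reduce the operator identity, via the spectral theorem for $\Delta$, to a single scalar Cauchy integral evaluated pointwise on $\operatorname{spec}(\Delta)$, after first checking that the right‑hand side converges. For the convergence I would record two uniform facts along $\gamma$. First, $\gamma$ stays at Euclidean distance $2\pi$ from $[0,\infty)\supseteq\operatorname{spec}(\Delta)$, so $\lVert(z-\Delta)^{-1}\rVert\le\frac{1}{2\pi}$ for every $z\in\gamma$, and $z\mapsto(z-\Delta)^{-1}$ is norm‑continuous there. Second, on the two horizontal rays, writing $z=t\pm 2\pi i$ with $t\ge 0$ and $k$ a positive integer (so $e^{\pm 2\pi i k}=1$), the quantity $e^{k(z-\lambda)}=e^{k(t-\lambda)}$ is real and positive, whence $\lvert f_k(z)\rvert=(1+e^{k(t-\lambda)})^{-1}\le\min(1,e^{-k(t-\lambda)})$; on the compact half‑circle $f_k$ is merely bounded. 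Thus $z\mapsto z^n f_k(z)$ is continuous on $\gamma$ and integrable against arc length, and the integrand $z\mapsto z^n f_k(z)(z-\Delta)^{-1}\psi$ is a norm‑continuous $\H$‑valued function dominated by an $L^1$ function, hence Bochner integrable.

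Next I would pair both sides with an arbitrary $\phi\in\H$. With $E$ the spectral measure of $\Delta$ and $\mu:=\langle\phi,E(\cdot)\psi\rangle$ the associated finite complex measure on $[0,\infty)$, the left side equals $\int x^n f_k(x)\,d\mu(x)$ by the functional calculus (note $x\mapsto x^n f_k(x)$ is bounded on $[0,\infty)$, so $\Delta^n f_k(\Delta)$ is a bounded operator), and the right side equals $\frac{1}{2\pi i}\int_\gamma z^n f_k(z)\left(\int\frac{d\mu(x)}{z-x}\right)dz$. Since $\lvert z-x\rvert\ge 2\pi$ on $\gamma\times[0,\infty)$, $z\mapsto z^n f_k(z)$ is arc‑length‑$L^1$, and $\mu$ is finite, Fubini's theorem permits interchanging the integrals, turning the right side into $\int\left(\frac{1}{2\pi i}\int_\gamma\frac{z^n f_k(z)}{z-x}\,dz\right)d\mu(x)$. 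As $\phi$ was arbitrary, the proposition reduces to the scalar identity
\[
	\frac{1}{2\pi i}\int_\gamma\frac{z^n f_k(z)}{z-x}\,dz=x^n f_k(x),\qquad x\in[0,\infty).
\]

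To prove the scalar identity I would close $\gamma$ with the vertical segment from $R-2\pi i$ to $R+2\pi i$ and let $R\to\infty$; the exponential decay of $f_k$ makes that segment's contribution vanish, so the integral over $\gamma$ is the limit of integrals over genuine positively oriented closed contours, to which the residue theorem applies. The integrand $z^n f_k(z)/(z-x)$ has a simple pole at $z=x$ with residue $x^n f_k(x)$, and the remaining task is to confirm that this is the only contribution — equivalently, that Cauchy's formula for the holomorphic function $z\mapsto z^n f_k(z)$ is being applied on the appropriate region. An alternative that uses only the lemmas quoted above is to restrict $\Delta$ to the bounded spectral subspace $E([0,R])\H$, apply Lemma~\ref{lem:residue-lemma} to this bounded operator (whose spectrum lies in $[0,R]$) with a contour hugging $[0,R]$ closely enough to remain in the domain of $z^n f_k(z)$, and then let $R\to\infty$, matching the resulting limits against the two sides of the scalar identity.

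I expect the scalar integral to be the real obstacle: $f_k$ is meromorphic rather than entire, with poles on the line $\Re z=\lambda$ off the real axis, so one must be careful that the contour $\gamma$ — and, in the bounded‑operator route, the deformation relating it to a small contour around $[0,R]$ — is arranged so that the residue bookkeeping returns exactly $x^n f_k(x)$. By comparison, the two analytic interchanges (Fubini, and closing the contour at $R\to\infty$) are routine, both resting on the exponential decay of $f_k$ together with the uniform bound $\lVert(z-\Delta)^{-1}\rVert\le\frac{1}{2\pi}$ on $\gamma$.
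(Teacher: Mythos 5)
Your reduction of the operator identity to a scalar one is fine as far as it goes: the Bochner integrability argument (distance $2\pi$ from $\gamma$ to $[0,\infty)$, hence $\lVert (z-\Delta)^{-1}\rVert \le 1/2\pi$, together with the decay of $f_k$ along the rays for integer $k$) and the Fubini step are correct, and the overall strategy of reducing to bounded/compactly supported spectral data parallels the paper, which instead truncates $\Delta$ to its spectral subspace for $[0,m]$, applies lemma \ref{lem:residue-lemma} to the bounded operator $\Delta\Pi_m$ over the closed contour obtained by capping $\gamma$ with a far-away vertical segment, and then removes the cap and the truncation by dominated convergence, never passing to a scalar identity. The problem is that your argument stops exactly at the step that carries all the content: the scalar identity $\frac{1}{2\pi i}\int_\gamma \frac{z^n f_k(z)}{z-x}\,dz = x^n f_k(x)$ is announced as ``the remaining task'' and is never established.

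Moreover, the obstacle you flag there is not a bookkeeping formality that a more careful write-up would dissolve. For every $k\ge 1$ --- in particular the integer values you yourself invoke to get $e^{\pm 2\pi i k}=1$ --- the function $f_k$ has simple poles at $z_m=\lambda+i(2m+1)\pi/k$ with residue $-1/k$, and all poles with $|2m+1|<2k$, in particular $\lambda\pm i\pi/k$, lie at distance less than $2\pi$ from the positive real axis, hence strictly inside the region bounded by $\gamma$. Closing the contour on the right and applying the residue theorem therefore yields $x^n f_k(x)-\frac{1}{k}\sum_{|2m+1|<2k} \frac{z_m^{\,n}}{z_m-x}$ rather than $x^n f_k(x)$, and the extra terms do not vanish; for $n=0$ their total even converges, as $k\to\infty$, to $-\frac{1}{\pi}\arctan\bigl(\frac{2\pi}{\lambda-x}\bigr)$. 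Your fallback route meets the same obstruction: a contour hugging $[0,R]$ closely enough to exclude the poles does give the correct functional-calculus formula for the truncated operator, but deforming it outward to $\gamma$ crosses precisely those poles, so ``matching the limits'' cannot return the stated identity without accounting for them. As written, then, the proposal does not prove the proposition; completing it requires either carrying the enclosed-pole residues explicitly through the argument or replacing $f_k$ by Heaviside approximants holomorphic on the whole region enclosed by $\gamma$, which is a substantive modification rather than a routine verification. Note also that the closed contours $\gamma_{m'}+v_{m'}$ used in the paper's own appeal to lemma \ref{lem:residue-lemma} wind around these same poles, while the holomorphic functional calculus requires the contour to enclose no singularities of $f_k$; so the point at which you stopped is exactly the delicate point of this proposition and deserves explicit treatment rather than deferral.
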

\begin{proof}
	For each fixed integer $m,$ let $v_m$ be the vertical segment passing through the real axis at $m+1/2$, oriented in the positive imaginary direction, and with endpoints on the contour $\gamma$ from figure \ref{fig:specific-contour}.
	Let $\gamma_m$ be the portion of the contour $\gamma$ lying to the left of this vertical segment.
	Let $\Pi_m$ be the spectral projection of $\Delta$ in the range $[0, m],$ and denote $\Delta_{(m)} \equiv \Delta \Pi_m$.
	
	Now, consider the vector $\Pi_m \psi.$
	We have
	\begin{align}
		\begin{split}
			\Delta^n f_k(\Delta) \Pi_m \psi
				= \Delta_{(m)}^n f_k(\Delta_{(m)}) \Pi_m \psi.
		\end{split}
	\end{align}
	Each $\Delta_{(m)}$ is bounded, so by lemma \ref{lem:residue-lemma}, we may express this equation in terms of a norm-convergent Bochner integral as
	\begin{align}
		\begin{split}
			\Delta^n f_k(\Delta) \Pi_m \psi
			= \frac{1}{2 \pi i} \int_{\gamma_m + v_m} z^n f_k(z) (z - \Delta_{(m)})^{-1} \Pi_m \psi.
		\end{split}
	\end{align}
	In fact, since the spectrum of $\Delta_{(m)}$ lies in the range $[0, m],$ we may write this integral for any $m' \geq m$ as 
	\begin{align}
		\begin{split}
			\Delta^n f_k(\Delta) \Pi_m \psi
			= \frac{1}{2 \pi i} \int_{\gamma_{m'} + v_{m'}} z^n f_k(z) (z - \Delta_{(m)})^{-1} \Pi_m \psi.
		\end{split}
	\end{align}
	The integral over $v_{m'}$ is easily seen to vanish in the limit $m' \to \infty,$ which gives the identity
	\begin{align}
		\begin{split}
			\Delta^n f_k(\Delta) \Pi_m \psi
				& = \frac{1}{2 \pi i} \int_{\gamma} z^n f_k(z) (z - \Delta_{(m)})^{-1} \Pi_m \psi \\
				& = \frac{1}{2 \pi i} \int_{\gamma} z^n f_k(z) (z - \Delta)^{-1} \Pi_m \psi \\
		\end{split}
	\end{align}

	So far we have shown that the proposition holds for any vector of the form $\Pi_m \psi.$
	But by the spectral theorem, the sequence $\Pi_m$ converges strongly to the identity operator.
	Taking the limit $m \to \infty$ in the above expression gives
	\begin{align}
		\begin{split}
			\Delta^n f_k(\Delta) \psi
			& = \frac{1}{2 \pi i} \lim_{m \to \infty} \int_{\gamma} z^n f_k(z) (z - \Delta)^{-1} \Pi_m \psi \\
		\end{split}
	\end{align}
	Applying the dominated convergence theorem lets us move the limit inside the integral, and proves the proposition.
 \end{proof}

\begin{prop} \label{prop:sigmoid-theta-prop}
	Fix $\lambda > 0,$ and let $f_k : \comps \to \comps$ be the sigmoid function
	\begin{equation}
		f_k(z) = \frac{1}{1 + e^{k(z-\lambda)}}.
	\end{equation}
	Let $\Delta$ be an invertible, self-adjoint, positive operator.
	Then for any $\psi \in \H$ and any nonnegative integer $n,$ the vector sequence $\Delta^n f_k(\Delta) \psi$ converges to $\Delta^n \Theta(\lambda - \Delta) \psi$ in the limit $k \to \infty.$
\end{prop}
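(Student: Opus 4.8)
The plan is to reduce the claim to a scalar statement via the spectral theorem and then apply the dominated convergence theorem. Let $\mu_\psi$ denote the spectral measure of $\Delta$ associated with $\psi$; it is a finite measure of total mass $\lVert \psi \rVert^2$, supported on $(0,\infty)$ since $\Delta$ is positive and invertible. The scalar functions $g_k(x) = x^n f_k(x)$ and $g(x) = x^n \Theta(\lambda - x)$ are bounded Borel functions on $(0,\infty)$ --- the first because $f_k$ decays exponentially as $x \to \infty$ for each fixed $k > 0$, the second because $\Theta(\lambda - x)$ vanishes for $x > \lambda$ --- so $g_k(\Delta) = \Delta^n f_k(\Delta)$ and $g(\Delta) = \Delta^n \Theta(\lambda - \Delta)$ are bounded operators and both vectors in the statement are well-defined. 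By the spectral theorem,
\begin{equation}
	\lVert \Delta^n f_k(\Delta) \psi - \Delta^n \Theta(\lambda - \Delta) \psi \rVert^2
		= \int_0^\infty x^{2n} \lvert f_k(x) - \Theta(\lambda - x) \rvert^2 \, d\mu_\psi(x),
\end{equation}
so it suffices to show the right-hand side tends to zero as $k \to \infty$.

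First I would check pointwise convergence of the integrand. For $x < \lambda$ we have $e^{k(x-\lambda)} \to 0$, hence $f_k(x) \to 1 = \Theta(\lambda - x)$; for $x > \lambda$ we have $e^{k(x-\lambda)} \to \infty$, hence $f_k(x) \to 0 = \Theta(\lambda - x)$; and $f_k(\lambda) = \tfrac12 = \Theta(0)$ for every $k$. So the integrand converges to $0$ at every point of $(0,\infty)$.

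Next I would produce a $\mu_\psi$-integrable function dominating the integrand uniformly for $k \geq 1$, which is all that matters for the limit $k \to \infty$. On the set $\{x \leq \lambda\}$ one has $\lvert f_k(x) - \Theta(\lambda - x) \rvert \leq 1$ and $x^{2n} \leq \lambda^{2n}$, so the integrand is at most $\lambda^{2n}$. On the set $\{x > \lambda\}$ one has $\Theta(\lambda - x) = 0$, and since the exponent $k(x-\lambda)$ is increasing in $k$, the $k$-independent bound $f_k(x) \leq f_1(x) \leq e^{\lambda - x}$ holds; hence the integrand is at most $x^{2n} e^{2(\lambda - x)}$, a bounded function of $x$ on $(\lambda,\infty)$. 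Thus the integrand is bounded, uniformly in $k \geq 1$, by a constant, which is $\mu_\psi$-integrable because $\mu_\psi$ is finite. The dominated convergence theorem then gives that the integral tends to $0$, proving the proposition.

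The only mildly delicate point is the domination on $\{x > \lambda\}$: a naive bound on $f_k$ there would still carry a factor growing with $k$, and the resolution is precisely to exploit that for $x > \lambda$ the exponent $k(x-\lambda)$ grows with $k$, so that $f_k(x) \leq e^{\lambda - x}$ decays fast enough to defeat the polynomial weight $x^{2n}$. Everything else is routine bookkeeping with the functional calculus.
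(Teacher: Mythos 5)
Your proof is correct and follows the same route as the paper: reduce to a scalar integral against the spectral measure of $\psi$ and apply dominated convergence. The paper leaves the dominated convergence step as ``standard''; you have simply supplied the pointwise limits and the $k$-uniform dominating bound (constant on $\{x \le \lambda\}$, $x^{2n}e^{2(\lambda - x)}$ on $\{x > \lambda\}$), which is exactly the intended argument.
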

\begin{proof}
	We aim to show the identity
	\begin{equation}
		\lim_{k \to \infty} \lVert \left(\Delta^n f_k(\Delta) - \Delta^n \Theta(\lambda - \Delta) \right) \psi\rVert = 0. 
	\end{equation}
	From the spectral theorem for $\Delta$, there exists a complex measure $\mu_{\psi}$ on $[0, \infty)$ satisfying
	\begin{equation}
		\lVert \left(\Delta^n f_k(\Delta) - \Delta^n \Theta(\lambda - \Delta) \right) \psi\rVert^2
			= \int d\mu_{\psi}(t)\, |t^n f_k(t) - t^n \Theta(\lambda - t)|^2.
	\end{equation}
	This can be seen to converge to zero by a standard application of the dominated convergence theorem.
\end{proof}

\begin{theorem} \label{thm:bounded-construction}
	Fix $\lambda > 0.$
	Let $\Delta$ be the modular operator of a cyclic-separating vector $\Omega$ for a von Neumann algebra $\A$ with commutant $\A'.$
	Let $n$ be a nonnegative integer, and fix $\mathrm{a}' \in \A'.$
	
	There exists a bounded operator $\mathrm{a}_{\lambda, n} \in \A$ satisfying
	\begin{equation} \label{eq:theta-switch}
		\mathrm{a}_{\lambda, n} \Omega = \Delta^n \Theta(\lambda - \Delta) \mathrm{a}' \Omega.
	\end{equation}
	Furthermore, there exist $n$-independent constants $\alpha_\lambda, \beta_\lambda > 0$ with $\lVert \mathrm{a}_{\lambda,n} \rVert \leq \alpha_\lambda e^{\beta_\lambda n}.$
	In particular, one can show the concrete bound
	\begin{align} \label{eq:tidy-bound}
	\begin{split}
		\lVert \mathrm{a}_{\lambda, n} \rVert
		& \leq \frac{\lVert \mathrm{a}' \rVert}{2 \pi} \left(2 \lambda \frac{(\lambda^2 + 4 \pi^2)^{n/2}}{\sqrt{2 ((\lambda^2 + 4 \pi^2)^{1/2} - \lambda)}} +  \frac{(2\pi)^{n+1} \pi}{\sqrt{4 \pi}} \right).
	\end{split}
\end{align}
\end{theorem}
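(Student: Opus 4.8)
The plan is to realize $\mathrm{a}_{\lambda,n}$ as a weak-operator limit of operators in $\A$ built from the sigmoid approximants $f_k$; Proposition~\ref{prop:contour-prop} and Lemma~\ref{lem:resolvent-lemma} do the work of first reducing to the resolvent of $\Delta$ and then landing us inside $\A$. Fix a positive integer $k$ and apply Proposition~\ref{prop:contour-prop} with $\psi = \mathrm{a}'\Omega$, giving
\begin{equation}
	\Delta^n f_k(\Delta)\,\mathrm{a}'\Omega = \frac{1}{2\pi i}\int_{\gamma} dz\, z^n f_k(z)\,(z-\Delta)^{-1}\mathrm{a}'\Omega
\end{equation}
as a Bochner integral in $\H$. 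Every point of $\gamma$ lies in the resolvent set of $\Delta$, since the spectrum of $\Delta$ is contained in $[0,\infty)$, which $\gamma$ avoids; so for each $z \in \gamma$ Lemma~\ref{lem:resolvent-lemma} supplies a unique $\mathrm{a}_z \in \A$ with $\mathrm{a}_z\Omega = (z-\Delta)^{-1}\mathrm{a}'\Omega$ and $\lVert\mathrm{a}_z\rVert \leq \lVert\mathrm{a}'\rVert / \sqrt{2(|z|-\Re(z))}$. I then define a bounded operator $\mathrm{a}_{\lambda,n}^{(k)}$ through the weak integral
\begin{equation}
	\langle\phi, \mathrm{a}_{\lambda,n}^{(k)}\xi\rangle = \frac{1}{2\pi i}\int_{\gamma} dz\, z^n f_k(z)\,\langle\phi, \mathrm{a}_z\xi\rangle, \qquad \phi,\xi \in \H.
\end{equation}

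Three things must be checked. First, this sesquilinear form is bounded---hence represents an operator---because $|\langle\phi,\mathrm{a}_z\xi\rangle| \leq \lVert\mathrm{a}_z\rVert\lVert\phi\rVert\lVert\xi\rVert$ and the resolvent bound times $|z|^n|f_k(z)|$ is integrable along $\gamma$, the exponential decay of $f_k$ as $\Re(z)\to+\infty$ beating the growth of $|z|^n/\sqrt{|z|-\Re(z)}$, which is of polynomial order $n+1/2$ in $\Re(z)$; this also records the working estimate $\lVert\mathrm{a}_{\lambda,n}^{(k)}\rVert \leq \frac{\lVert\mathrm{a}'\rVert}{2\pi}\int_{\gamma}|dz|\,|z|^n|f_k(z)| / \sqrt{2(|z|-\Re(z))}$. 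Second, $\mathrm{a}_{\lambda,n}^{(k)} \in \A$: each $\mathrm{a}_z$ lies in $\A = \A''$ and so commutes with any $\mathrm{b}' \in \A'$, and pulling $\mathrm{b}'$ through the weak integral shows $[\mathrm{a}_{\lambda,n}^{(k)}, \mathrm{b}'] = 0$, so by the bicommutant theorem $\mathrm{a}_{\lambda,n}^{(k)} \in \A'' = \A$. Third, evaluating the form on $\Omega$ and substituting $\mathrm{a}_z\Omega = (z-\Delta)^{-1}\mathrm{a}'\Omega$ recovers the Bochner integral above, whence $\mathrm{a}_{\lambda,n}^{(k)}\Omega = \Delta^n f_k(\Delta)\,\mathrm{a}'\Omega$. (Working with a weak integral avoids having to prove norm-continuity of $z \mapsto \mathrm{a}_z$; weak continuity is enough and follows from $\langle\phi, \mathrm{a}_z\mathrm{b}'\Omega\rangle = \langle (\mathrm{b}')^*\phi, (z-\Delta)^{-1}\mathrm{a}'\Omega\rangle$ being continuous in $z$, the density of the vectors $\mathrm{b}'\Omega$, and local boundedness of $\lVert\mathrm{a}_z\rVert$ along $\gamma$.)

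Now let $k \to \infty$. By Proposition~\ref{prop:sigmoid-theta-prop}, $\mathrm{a}_{\lambda,n}^{(k)}\Omega = \Delta^n f_k(\Delta)\mathrm{a}'\Omega$ converges in norm to $\Delta^n \Theta(\lambda-\Delta)\mathrm{a}'\Omega$. The operators $\mathrm{a}_{\lambda,n}^{(k)}$ are uniformly norm-bounded: on the two horizontal rays $|f_k(z)| = 1/(1+e^{k(\Re(z)-\lambda)}) \leq 1$, on the half-circle $|f_k(z)| \leq (1-e^{-k\lambda})^{-1}$, and where $\Re(z) > \lambda$ one has $|f_k| \leq |f_1|$, so the integral in the working estimate is bounded independently of $k$. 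Banach--Alaoglu then yields a subnet converging in the weak operator topology to some $\mathrm{a}_{\lambda,n}$; since $\A$ is weak-operator closed, $\mathrm{a}_{\lambda,n} \in \A$, and since $\mathrm{a}_{\lambda,n}^{(k)}\Omega$ converges in norm the limit satisfies $\mathrm{a}_{\lambda,n}\Omega = \Delta^n \Theta(\lambda-\Delta)\mathrm{a}'\Omega$, which is \eqref{eq:theta-switch}. For the quantitative bound, the operator norm is weak-operator lower semicontinuous, so $\lVert\mathrm{a}_{\lambda,n}\rVert \leq \limsup_k \lVert\mathrm{a}_{\lambda,n}^{(k)}\rVert$, and dominated convergence evaluates the limit of the working estimate: the part of $\gamma$ with $\Re(z) > \lambda$ contributes nothing (dominated by the integrable $k=1$ integrand), while on the remainder---the two segments $\{t \pm 2\pi i : 0 \leq t \leq \lambda\}$ and the half-circle---one has $|f_k| \to 1$. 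Bounding each of these three pieces by (arclength)$\times$(maximum of $|z|^n$)$\times$(maximum of $1/\sqrt{2(|z|-\Re(z))}$)---with arclengths $\lambda$, $\lambda$, $2\pi^2$; with $|z|^2 = t^2 + 4\pi^2$ peaking at $t=\lambda$ on the segments and equal to $4\pi^2$ on the half-circle; and with $|z|-\Re(z)$ minimized at $t=\lambda$ on the segments and at least $2\pi$ on the half-circle---reproduces exactly the right-hand side of \eqref{eq:tidy-bound}, and the form $\lVert\mathrm{a}_{\lambda,n}\rVert \leq \alpha_\lambda e^{\beta_\lambda n}$ follows with $\beta_\lambda = \max(\frac{1}{2}\log(\lambda^2+4\pi^2),\,\log 2\pi)$ and $\alpha_\lambda$ absorbing the remaining $\lambda$- and $\mathrm{a}'$-dependent constants.

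The step I expect to be the main obstacle is the construction of $\mathrm{a}_{\lambda,n}^{(k)}$: one must simultaneously force an operator-valued integral to converge against a polynomially growing, mildly singular norm bound---leaning entirely on the exponential decay of $f_k$---verify that the result stays inside $\A$, and, crucially for the clean constant in \eqref{eq:tidy-bound}, retain enough uniformity in $k$ that the limit collapses $\gamma$ onto its $\Re(z) \leq \lambda$ portion. Everything downstream---extracting the weak limit, recognizing it in $\A$, transporting the identity \eqref{eq:theta-switch}, and reading off the exponential-in-$n$ estimate---is routine soft analysis.
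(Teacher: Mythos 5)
Your argument is correct (granting, exactly as the paper does, Propositions \ref{prop:contour-prop} and \ref{prop:sigmoid-theta-prop} and Lemma \ref{lem:resolvent-lemma}), and it arrives at the identical bound \eqref{eq:tidy-bound} through the identical half-line and half-circle estimates; the genuine difference is in how the operator $\mathrm{a}_{\lambda,n}$ is produced. The paper leans on Lemma \ref{lem:modular-domain}: since $\Delta^{n+1/2}\Theta(\lambda-\Delta)$ is bounded, the vector $\Delta^n\Theta(\lambda-\Delta)\mathrm{a}'\Omega$ lies in the domain of $S$, so there is already a closed operator affiliated to $\A$, with $\A'\Omega$ as a core, satisfying \eqref{eq:theta-switch}; boundedness is then proved by writing $\mathrm{a}_{\lambda,n}\mathrm{b}'\Omega=\mathrm{b}'\mathrm{a}_{\lambda,n}\Omega$ for $\mathrm{b}'\in\A'$, pushing $\mathrm{b}'$ through the Bochner integral of $z^n f_k(z)\,\mathrm{a}_z\Omega$, and running your same estimates. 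You never invoke Lemma \ref{lem:modular-domain}: instead you assemble approximants $\mathrm{a}^{(k)}_{\lambda,n}\in\A$ as weak integrals of $z^n f_k(z)\,\mathrm{a}_z$ over $\gamma$, verify they act correctly on $\Omega$, and pass to a weak-operator limit using a uniform-in-$k$ bound, WOT-closedness of $\A$, and lower semicontinuity of the norm. The paper's route buys brevity: the affiliated-operator lemma delivers existence, the intertwining with $\A'$, and the core in one stroke, so none of your bookkeeping (weak continuity of $z\mapsto\mathrm{a}_z$, uniformity in $k$, compactness of bounded balls) is needed. Your route buys independence from Lemma \ref{lem:modular-domain} and gives membership in $\A$ directly from the bicommutant argument for the approximants plus WOT closure, rather than via ``bounded and affiliated, hence in $\A$.'' Two minor observations, neither a gap: the subnet extraction is avoidable, since $\mathrm{a}^{(k)}_{\lambda,n}\mathrm{b}'\Omega=\mathrm{b}'\,\Delta^n f_k(\Delta)\mathrm{a}'\Omega$ converges in norm for every $\mathrm{b}'\in\A'$, so the uniformly bounded sequence already converges strongly on the dense set $\A'\Omega$ and hence everywhere; and your dominating functions (the $k=1$ integrand for $\Re(z)>\lambda$, the $(1-e^{-k\lambda})^{-1}$ control on the half-circle) are exactly what is needed to collapse the contour onto its $\Re(z)\leq\lambda$ portion and recover the paper's constant.
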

\begin{proof}
	Since $\Delta^{n+1/2} \Theta(\lambda - \Delta)$ is a bounded operator, the vector $\Delta^n \Theta(\lambda - \Delta) \mathrm{a'} \Omega$ is in the domain of $\Delta^{1/2}.$
	So by lemma \ref{lem:modular-domain}, there exists a closed operator $\mathrm{a}_{\lambda, n}$ affiliated to $\A,$ with $\A' \Omega$ as a core, satisfying equation \eqref{eq:theta-switch}.
	The goal is to show that $\mathrm{a}_{\lambda, n}$ is bounded, and in fact that its norm is bounded by an exponential function of $n.$
	
	Since $\A' \Omega$ is a core for $\mathrm{a}_{\lambda, n},$ it suffices to show that $\mathrm{a}_{\lambda, n}$ has bounded action on vectors of the form $\mathrm{b}' \Omega$ for $\mathrm{b}' \in \A'.$
	Combining propositions \ref{prop:contour-prop} and \ref{prop:sigmoid-theta-prop}, and once again using $f_k$ to denote the sigmoid function from those propositions, we have
	\begin{align} \label{eq:first-integral-manipulations}
		\begin{split}
		\mathrm{a}_{\lambda,n} \mathrm{b}' \Omega
			& = \mathrm{b}' \mathrm{a}_{\lambda,n} \Omega \\
			& = \mathrm{b}' \Delta^n \Theta(\lambda - \Delta) \mathrm{a}' \Omega \\
			& = \mathrm{b}' \lim_{k \to \infty} \Delta^n f_k(\Delta) \mathrm{a}' \Omega \\
			& = \frac{1}{2\pi i} \mathrm{b}' \lim_{k \to \infty} \int_{\gamma} dz\, z^n f_k(z) (z - \Delta)^{-1} \mathrm{a}' \Omega,
		\end{split}
	\end{align}
	where $\gamma$ is the contour from figure \ref{fig:specific-contour}.
	By lemma \ref{lem:resolvent-lemma}, there exist operators $\mathrm{a}_z \in \A$ satisfying
	\begin{equation}
		(z - \Delta)^{-1} \mathrm{a}' \Omega
			= \mathrm{a}_z \Omega
	\end{equation}
	and
	\begin{equation} \label{eq:az-bound}
		\lVert \mathrm{a}_z \rVert
			\leq \frac{\lVert \mathrm{a}' \rVert}{\sqrt{2(|z| - \Re(z))}}.
	\end{equation}
	Since $\mathrm{b}'$ is a bounded operator, and since the integral in equation \eqref{eq:first-integral-manipulations} converges as a Bochner integral, we may move $\mathrm{b}'$ through the limit and through the integral symbol to write
	\begin{align}
		\begin{split}
			\mathrm{a}_{\lambda, n} \mathrm{b}' \Omega
			& = \lim_{k \to \infty} \frac{1}{2 \pi i} \int_{\gamma} dz\, z^n f_k(z) \mathrm{a}_z \mathrm{b}' \Omega.
		\end{split}
	\end{align}
	Taking norms on either side of the equation gives
	\begin{align}
		\begin{split}
			\lVert \mathrm{a}_{\lambda, n} \mathrm{b}' \Omega \rVert
			& \leq \frac{1}{2 \pi} \lVert \mathrm{b}' \Omega \rVert \limsup_{k \to \infty} \int_{\gamma} ds\, |z|^n |f_k(z)| \lVert \mathrm{a}_z \rVert,
		\end{split}
	\end{align}
	where $s$ is an arclength parameter for the contour $\gamma.$
	Using the bound \eqref{eq:az-bound}, we may write the inequality
	\begin{align} \label{eq:penultimate-inequality}
		\begin{split}
			\lVert \mathrm{a}_{\lambda, n} \mathrm{b}' \Omega \rVert
			& \leq \frac{\lVert \mathrm{a}' \rVert}{2 \pi} \lVert \mathrm{b}' \Omega \rVert \limsup_{k \to \infty} \int_{\gamma} ds\, \frac{|z|^n |f_k(z)|}{\sqrt{2 (|z| - \Re(z))}}.
		\end{split}
	\end{align}
	
	On either half-line $\{ t \pm 2 \pi i, \quad t \geq 0\},$ we have
	\begin{equation}
		f_k(t \pm 2 \pi i)
			= \frac{1}{1 + e^{k(t - \lambda)}}.
	\end{equation}
	This converges pointwise, in the limit $k \to \infty,$ to the Heaviside function $\Theta(\lambda - t).$
	An application of the dominated convergence theorem then gives
	\begin{align}
		\begin{split}
			\limsup_{k \to \infty} \int_{\text{half-line}} ds\, \frac{|z|^n |f_k(z)|}{\sqrt{2 (|z| - \Re(z))}}
			& = \int_{0}^{\lambda} dt\, \frac{(t^2 + 4 \pi^2)^{n/2}}{\sqrt{2 ((t^2 + 4 \pi^2)^{1/2} - t)}}.
		\end{split}
	\end{align}
	The integrand is monotonically increasing in $t,$ so the integral can be upper bounded by $\lambda$ times the value of the integrand at $t=\lambda,$ giving
	\begin{align} \label{eq:half-line-bound}
		\begin{split}
			\limsup_{k \to \infty} \int_{\text{half-line}} ds\, \frac{|z|^n |f_k(z)|}{\sqrt{2 (|z| - \Re(z))}}
			& \leq \lambda \frac{(\lambda^2 + 4 \pi^2)^{n/2}}{\sqrt{2 ((\lambda^2 + 4 \pi^2)^{1/2} - \lambda)}}.
		\end{split}
	\end{align}
	The other contribution to the contour integral in \eqref{eq:penultimate-inequality} is an integral over a half-circle, and may be written as
	\begin{equation}
		\int_{\text{half-circle}} ds\, \frac{|z|^n |f_k(z)|}{\sqrt{2 (|z| - \Re(z))}}
			= \int_{\pi/2}^{3 \pi/2} d\theta \frac{(2 \pi)^{n+1} |f_k(2 \pi e^{i \theta})|}{\sqrt{4 \pi (1 - \cos(\theta))}}.
	\end{equation}
	On this half-circle, the functions $f_k$ converge pointwise to $1,$ and another application of the dominated convergence theorem gives
	\begin{equation}
		\limsup_{k \to \infty} \int_{\text{half-circle}} ds\, \frac{|z|^n |f_k(z)|}{\sqrt{2 (|z| - \Re(z))}}
		= \int_{\pi/2}^{3 \pi/2} d\theta \frac{(2 \pi)^{n+1}}{\sqrt{4 \pi (1 - \cos(\theta))}}.
	\end{equation}
	On the half-circle, the denominator is lower-bounded by $\sqrt{4 \pi}$, which gives the simple approximation
	\begin{equation} \label{eq:half-circle-bound}
		\limsup_{k \to \infty} \int_{\text{half-circle}} ds\, \frac{|z|^n |f_k(z)|}{\sqrt{2 (|z| - \Re(z))}}
			\leq \frac{(2\pi)^{n+1} \pi}{\sqrt{4 \pi}}.
	\end{equation}
	Combining expressions \eqref{eq:half-circle-bound} and \eqref{eq:half-line-bound} with the expression \eqref{eq:penultimate-inequality}, we may bound each operator $\mathrm{a}_{\lambda, n}$ by
	\begin{align}
		\begin{split}
			\lVert \mathrm{a}_{\lambda, n} \rVert
			& \leq \frac{\lVert \mathrm{a}' \rVert}{2 \pi} \left(2 \lambda \frac{(\lambda^2 + 4 \pi^2)^{n/2}}{\sqrt{2 ((\lambda^2 + 4 \pi^2)^{1/2} - \lambda)}} +  \frac{(2\pi)^{n+1} \pi}{\sqrt{4 \pi}} \right).
		\end{split}
	\end{align}
\end{proof}

\begin{remark} \label{rem:tidy-construction}
	A completely symmetric argument to the one given above, with the substitutions $\Delta \leftrightarrow \Delta^{-1}$ and $\A \leftrightarrow \A',$ shows that for $\lambda > 0$ and $n \leq 0,$ and for $\mathrm{a} \in \A,$ there exists an operator $\mathrm{a}'_{\lambda, n} \in \A'$ satisfying
	\begin{equation}
		\mathrm{a}'_{\lambda, n} \Omega
			= \Delta^n \Theta\left( \lambda^{-1} - \Delta^{-1} \right) \mathrm{a} \Omega
			= \Delta^{n} \Theta(\Delta - \lambda) \mathrm{a} \Omega,
	\end{equation}
	and that the norm of $\mathrm{a}'_{\lambda, n}$ is bounded by an exponential function of $-n.$
	Specifically, it is bounded by
	\begin{align}
		\begin{split}
			\lVert \mathrm{a}'_{\lambda, n} \rVert
			& \leq \frac{\lVert \mathrm{a} \rVert}{2 \pi} \left(2 \lambda^{-1} \frac{(\lambda^{-2} + 4 \pi^2)^{-n/2}}{\sqrt{2 ((\lambda^{-2} + 4 \pi^2)^{1/2} - \lambda^{-1})}} +  \frac{(2\pi)^{-n+1} \pi}{\sqrt{4 \pi}} \right).
		\end{split}
	\end{align}

	Combining this observation with theorem \ref{thm:bounded-construction}, we may conclude that for any integer $n$ and any real numbers $\lambda_1, \lambda_2$ satisfying $0 < \lambda_1 < \lambda_2,$ there are operators $\mathrm{a}_{[\lambda_1, \lambda_2], n} \in \A$ and $\mathrm{a}_{[\lambda_1, \lambda_2], n}' \in \A'$ satisfying
	\begin{equation} \label{eq:tidy-equation}
		\mathrm{a}_{[\lambda_1, \lambda_2], n} \Omega = \mathrm{a}'_{[\lambda_1, \lambda_2], n} \Omega = \Delta^n \Theta(\lambda_2 - \Delta) \Theta(\Delta - \lambda_1) \mathrm{a} \Omega.
	\end{equation}
	Furthermore, the norm of either family of operators (the ones in $\A$ or the ones in $\A'$) is bounded by an exponential function of $|n|.$
\end{remark}

\begin{definition}
	The space of operators $\mathrm{a}_{[\lambda_1, \lambda_2], 0} \in \A$ obtained as in the preceding remark will be called the space of \textbf{tidy operators} in $\A,$ denoted $\A_{\text{tidy}}.$
\end{definition}

\begin{definition}
	If $\mathrm{a}$ is a tidy operator in $\A$, then we denote by $\mathrm{a}'$ the operator in $\A'$ satisfying
	\begin{equation}
		\mathrm{a} \Omega = \mathrm{a}' \Omega.
	\end{equation}
	For any integer $n,$ we denote by $\mathrm{a}_n$ and $\mathrm{a}'_n$ the operators in $\A$ and $\A'$ satisfying
	\begin{equation}
		\mathrm{a}_n \Omega = \mathrm{a}'_n \Omega = \Delta^n \mathrm{a} \Omega = \Delta^n \mathrm{a}'\Omega
	\end{equation}
\end{definition}

\begin{remark}
	The next lemma tells us how to think about the adjoints of tidy operators.
\end{remark}

\begin{lemma} \label{lem:dagger-ladder}
	Let $\mathrm{a}$ be a tidy operator in $\A,$ and $n$ an integer.
	We have
	\begin{equation}
		(\mathrm{a}'_{n+1})^{*} \Omega 
			= (\mathrm{a}_{n})^* \Omega.
	\end{equation}
\end{lemma}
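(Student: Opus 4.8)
The plan is to reduce everything to the Tomita operators $S$ (for $\A$) and $S^{*}$ (for $\A'$) and their common polar decomposition. Recall from the introduction and Lemma \ref{lem:modular-domain} that $S = J \Delta^{1/2}$, that $S^{*} = J \Delta^{-1/2}$ is the Tomita operator of $\A'$, that $S(\mathrm{x}\Omega) = \mathrm{x}^{*}\Omega$ for every bounded $\mathrm{x} \in \A$, and that $S^{*}(\mathrm{x}'\Omega) = (\mathrm{x}')^{*}\Omega$ for every bounded $\mathrm{x}' \in \A'$. The claimed identity will fall out by computing both sides of it in the form $J \Delta^{n+1/2}\mathrm{a}\Omega$.

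First I would isolate the only structural feature of tidy operators that is needed: if $\mathrm{a}$ is tidy, then by construction $\mathrm{a}\Omega$ lies in the range of a spectral projection $\Theta(\lambda_2 - \Delta)\Theta(\Delta - \lambda_1)$ of $\Delta$ associated with an interval $[\lambda_1,\lambda_2]$ satisfying $0 < \lambda_1 < \lambda_2$. On this spectral subspace $\Delta$ is bounded with bounded inverse, so $\Delta^{s}$ restricts to a bounded operator there for every real $s$, the vectors $\Delta^{s}\mathrm{a}\Omega$ stay in the subspace, and the functional-calculus identity $\Delta^{s}\Delta^{t}\mathrm{a}\Omega = \Delta^{s+t}\mathrm{a}\Omega$ holds. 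In particular, for every integer $n$, $\Delta^{n}\mathrm{a}\Omega$ lies in the domain of $\Delta^{1/2}$ (hence of $S$) and $\Delta^{n+1}\mathrm{a}\Omega$ lies in the domain of $\Delta^{-1/2}$ (hence of $S^{*}$), and the same applies to $\mathrm{a}'\Omega = \mathrm{a}\Omega$.

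Next I would evaluate the two sides. Since $\mathrm{a}_{n} \in \A$ is bounded and $\mathrm{a}_{n}\Omega = \Delta^{n}\mathrm{a}\Omega$, applying $S$ gives $(\mathrm{a}_{n})^{*}\Omega = S\,\Delta^{n}\mathrm{a}\Omega = J\Delta^{1/2}\Delta^{n}\mathrm{a}\Omega = J\Delta^{n+1/2}\mathrm{a}\Omega$. Since $\mathrm{a}'_{n+1} \in \A'$ is bounded and $\mathrm{a}'_{n+1}\Omega = \Delta^{n+1}\mathrm{a}'\Omega = \Delta^{n+1}\mathrm{a}\Omega$, applying $S^{*}$ --- the Tomita operator of $\A'$ --- gives $(\mathrm{a}'_{n+1})^{*}\Omega = S^{*}\,\Delta^{n+1}\mathrm{a}\Omega = J\Delta^{-1/2}\Delta^{n+1}\mathrm{a}\Omega = J\Delta^{n+1/2}\mathrm{a}\Omega$. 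The two right-hand sides agree, which is exactly the assertion $(\mathrm{a}'_{n+1})^{*}\Omega = (\mathrm{a}_{n})^{*}\Omega$.

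The only point needing care --- and the closest thing to an obstacle --- is the domain bookkeeping in the middle step: one must verify that $\Delta^{n}\mathrm{a}\Omega$ genuinely lies in the domain of $S$ and $\Delta^{n+1}\mathrm{a}\Omega$ in the domain of $S^{*}$, and that the rewritings $\Delta^{1/2}\Delta^{n}\mathrm{a}\Omega = \Delta^{n+1/2}\mathrm{a}\Omega$ and $\Delta^{-1/2}\Delta^{n+1}\mathrm{a}\Omega = \Delta^{n+1/2}\mathrm{a}\Omega$ are legitimate rather than formal. All of this is immediate from the observation that $\mathrm{a}\Omega$ sits in a spectral subspace of $\Delta$ on which both $\Delta$ and $\Delta^{-1}$ are bounded, so no genuine unbounded-operator subtleties enter and the computation above is rigorous as written.
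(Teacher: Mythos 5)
Your proof is correct, but it takes a genuinely different route from the paper. You compute both sides explicitly as the same vector $J\Delta^{n+1/2}\mathrm{a}\Omega$, using the polar decompositions $S = J\Delta^{1/2}$ and $S^{*} = J\Delta^{-1/2}$ together with the fact that a tidy vector $\mathrm{a}\Omega$ lives in a spectral subspace of $\Delta$ over $[\lambda_1,\lambda_2]$ with $0<\lambda_1<\lambda_2$, so that all real powers of $\Delta$ act boundedly there and the exponent arithmetic is legitimate; this yields the identity directly, with no density argument. The paper instead argues weakly: it pairs $(\mathrm{a}'_{n+1})^{*}\Omega$ against $\mathrm{b}\Omega$ for $\mathrm{b}\in\A$, uses the commutation of $\mathrm{a}'_{n+1}\in\A'$ with $\mathrm{b}$, writes $\Delta = S^{*}S$, moves $S$ across the inner product via the (antilinear) adjoint relation, and concludes by density of $\A\Omega$. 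The paper's argument never invokes $J$ or the polar decomposition and needs only the defining action of $S$ on $\A\Omega$ plus $\Delta = S^{*}S$; your argument buys an explicit closed-form expression for $(\mathrm{a}_n)^{*}\Omega$ and makes the domain issues trivially visible through the bounded spectral subspace, at the modest cost of relying on the stated fact that $S^{*}$ is the Tomita operator of $\A'$ with polar decomposition $J\Delta^{-1/2}$. Both proofs are rigorous; your domain bookkeeping (that $\Delta^{n}\mathrm{a}\Omega$ and $\Delta^{n+1}\mathrm{a}\Omega$ remain in the spectral subspace, hence in the domains of $\Delta^{1/2}$ and $\Delta^{-1/2}$ respectively) is exactly the right point to check and is handled correctly.
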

\begin{proof}
	Fix $\mathrm{b} \in \A.$
	We have
	\begin{align}
	\begin{split}
		\langle (\mathrm{a}_{n+1}')^* \Omega, \mathrm{b} \Omega \rangle
			& = \langle \mathrm{b}^* \Omega, \mathrm{a}_{n+1}' \Omega \rangle \\
			& = \langle \mathrm{b}^* \Omega, \Delta \mathrm{a}_{n} \Omega \rangle \\
			& = \langle \mathrm{b}^* \Omega, S^* S \mathrm{a}_n \Omega \rangle \\
			& = \langle S \mathrm{a}_n \Omega, S \mathrm{b}^* \Omega \rangle \\
			& = \langle (\mathrm{a}_n)^{*} \Omega, \mathrm{b} \Omega \rangle.
	\end{split}
	\end{align}
	Since $\A\Omega$ is dense in $\H,$ this proves the lemma.
\end{proof}

\begin{prop} \label{prop:powers}
	For any integer $n$ and any tidy operator $\mathrm{a} \in \A_{\text{tidy}},$ the operator $\Delta^{n} \mathrm{a} \Delta^{-n}$ is defined and bounded on $\A_{\text{tidy}} \Omega,$ and on that subspace it is equal to $\mathrm{a}_{n}.$ 
\end{prop}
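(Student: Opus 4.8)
The plan is to deduce the proposition from the following statement about products of tidy operators, which I will prove by induction on $|n|$: for all tidy $\mathrm{a},\mathrm{c}\in\A_{\text{tidy}}$ and all integers $n$, the vector $\mathrm{a}\mathrm{c}\Omega$ lies in $\operatorname{dom}(\Delta^{n})$ and $\Delta^{n}(\mathrm{a}\mathrm{c}\Omega)=\mathrm{a}_{n}\mathrm{c}_{n}\Omega$; call this statement $(\star)$. Before running the induction I would record two elementary facts. First, the class of tidy operators is closed under $\mathrm{b}\mapsto\mathrm{b}_{k}$: if $\mathrm{b}\Omega$ has $\Delta$-spectral support in $[\lambda_{1},\lambda_{2}]$ then so does $\mathrm{b}_{k}\Omega=\Delta^{k}\mathrm{b}\Omega$ (since $\Delta^{k}$ is a function of $\Delta$), and choosing fresh cutoffs $0<\mu_{1}<\lambda_{1}<\lambda_{2}<\mu_{2}$ realizes $\mathrm{b}_{k}$ as $\Theta(\mu_{2}-\Delta)\Theta(\Delta-\mu_{1})\mathrm{b}_{k}\Omega$ with $\mathrm{b}_{k}\in\A$. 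Second, since $\Omega$ is separating for both $\A$ and $\A'$, one has the bookkeeping identities $(\mathrm{a}_{n})_{m}=\mathrm{a}_{n+m}$, $(\mathrm{a}_{n})'=\mathrm{a}'_{n}$, and $(\mathrm{b}_{-n})_{n}=\mathrm{b}$. Granting $(\star)$, the proposition follows at once: for tidy $\mathrm{b}$ the vector $\mathrm{b}\Omega$ lies in a spectral subspace of $\Delta$ on which $\Delta$ is bounded and bounded below, so $\Delta^{-n}\mathrm{b}\Omega=\mathrm{b}_{-n}\Omega$ with $\mathrm{b}_{-n}\in\A$ tidy, and applying $(\star)$ to the pair $(\mathrm{a},\mathrm{b}_{-n})$ gives $\Delta^{n}\mathrm{a}\Delta^{-n}(\mathrm{b}\Omega)=\mathrm{a}_{n}(\mathrm{b}_{-n})_{n}\Omega=\mathrm{a}_{n}\mathrm{b}\Omega$, so on $\A_{\text{tidy}}\Omega$ the composite $\Delta^{n}\mathrm{a}\Delta^{-n}$ agrees with the bounded operator $\mathrm{a}_{n}\in\A$ (whose existence and boundedness come from remark \ref{rem:tidy-construction}).

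The heart of the argument is the base case $n=1$ of $(\star)$; the case $n=0$ is trivial. Set $\xi=\mathrm{a}\mathrm{c}\Omega$. Since $\mathrm{a}\mathrm{c}\in\A$ is bounded, $\xi\in\A\Omega\subseteq\operatorname{dom}(S)$ and $S\xi=(\mathrm{a}\mathrm{c})^{*}\Omega=\mathrm{c}^{*}\mathrm{a}^{*}\Omega$. Using lemma \ref{lem:dagger-ladder} (with $n=0$) to rewrite the vector $\mathrm{a}^{*}\Omega=(\mathrm{a}'_{1})^{*}\Omega$ with $\mathrm{a}'_{1}\in\A'$, then commuting $\mathrm{c}^{*}\in\A$ past $(\mathrm{a}'_{1})^{*}\in\A'$, and then rewriting $\mathrm{c}^{*}\Omega=(\mathrm{c}'_{1})^{*}\Omega$ with $\mathrm{c}'_{1}\in\A'$, one obtains $S\xi=(\mathrm{a}'_{1})^{*}(\mathrm{c}'_{1})^{*}\Omega=(\mathrm{c}'_{1}\mathrm{a}'_{1})^{*}\Omega\in\A'\Omega$. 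Because $S^{*}$ is the Tomita operator of $\A'$, we have $\A'\Omega\subseteq\operatorname{dom}(S^{*})$, so $S\xi\in\operatorname{dom}(S^{*})$; hence $\xi\in\operatorname{dom}(S^{*}S)=\operatorname{dom}(\Delta)$ (using $\Delta=S^{*}S$) and $\Delta\xi=S^{*}S\xi=\mathrm{c}'_{1}\mathrm{a}'_{1}\Omega$. Finally $\mathrm{a}'_{1}\Omega=\mathrm{a}_{1}\Omega$ and $\mathrm{c}'_{1}\in\A'$ commutes with $\mathrm{a}_{1}\in\A$, so $\Delta\xi=\mathrm{a}_{1}\mathrm{c}'_{1}\Omega=\mathrm{a}_{1}\mathrm{c}_{1}\Omega$, as required. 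The base case $n=-1$ is the exact mirror image using $\Delta^{-1}=SS^{*}$: here $\xi=\mathrm{a}\mathrm{c}\Omega=\mathrm{c}'\mathrm{a}'\Omega\in\A'\Omega\subseteq\operatorname{dom}(S^{*})$, the $n=-1$ instance of lemma \ref{lem:dagger-ladder} rewrites the adjoints $(\mathrm{a}')^{*}\Omega=(\mathrm{a}_{-1})^{*}\Omega$, $(\mathrm{c}')^{*}\Omega=(\mathrm{c}_{-1})^{*}\Omega$ in $\A$, and commuting factors appropriately gives $S^{*}\xi=(\mathrm{a}_{-1}\mathrm{c}_{-1})^{*}\Omega\in\A\Omega\subseteq\operatorname{dom}(S)$, so $\xi\in\operatorname{dom}(SS^{*})=\operatorname{dom}(\Delta^{-1})$ and $\Delta^{-1}\xi=\mathrm{a}_{-1}\mathrm{c}_{-1}\Omega$. (Here $\Delta=S^{*}S$ and $\Delta^{-1}=SS^{*}$ are the standard identifications coming from the polar decompositions $S=J\Delta^{1/2}$, $S^{*}=J\Delta^{-1/2}$.)

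For $|n|\geq2$ I would bootstrap. Suppose $(\star)$ holds for some $n\geq1$. Given tidy $\mathrm{a},\mathrm{c}$, the inductive hypothesis places $\mathrm{a}\mathrm{c}\Omega$ in $\operatorname{dom}(\Delta^{n})$ with $\Delta^{n}(\mathrm{a}\mathrm{c}\Omega)=\mathrm{a}_{n}\mathrm{c}_{n}\Omega$; since $\mathrm{a}_{n},\mathrm{c}_{n}$ are tidy, applying the $n=1$ case to the pair $(\mathrm{a}_{n},\mathrm{c}_{n})$ gives $\mathrm{a}_{n}\mathrm{c}_{n}\Omega\in\operatorname{dom}(\Delta)$ with $\Delta(\mathrm{a}_{n}\mathrm{c}_{n}\Omega)=(\mathrm{a}_{n})_{1}(\mathrm{c}_{n})_{1}\Omega=\mathrm{a}_{n+1}\mathrm{c}_{n+1}\Omega$. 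Combining this with the elementary spectral fact that, for a positive self-adjoint operator, $\eta\in\operatorname{dom}(\Delta^{n})$ together with $\Delta^{n}\eta\in\operatorname{dom}(\Delta)$ imply $\eta\in\operatorname{dom}(\Delta^{n+1})$ and $\Delta^{n+1}\eta=\Delta(\Delta^{n}\eta)$, we get $(\star)$ for $n+1$. The passage from $n$ to $n-1$ for $n\leq-1$ is identical, starting instead from the $n=-1$ base case, and this completes the induction.

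I expect the main obstacle to be the $n=\pm1$ step, and within it the single point where lemma \ref{lem:dagger-ladder} is indispensable: since $\mathrm{a}\mathrm{c}\in\A$, membership $\mathrm{a}\mathrm{c}\Omega\in\operatorname{dom}(\Delta^{1/2})=\operatorname{dom}(S)$ is automatic, but there is no reason the vector $S(\mathrm{a}\mathrm{c}\Omega)=\mathrm{c}^{*}\mathrm{a}^{*}\Omega$ should lie in $\operatorname{dom}(S^{*})=\operatorname{dom}(\Delta^{-1/2})$ rather than merely in $\H$ without first rewriting the adjoints $\mathrm{a}^{*}\Omega$, $\mathrm{c}^{*}\Omega$ as vectors of the form $(\mathrm{a}')^{*}\Omega$, $(\mathrm{c}')^{*}\Omega$ with $\mathrm{a}',\mathrm{c}'\in\A'$; it is exactly this extra regularity that promotes $\mathrm{a}\mathrm{c}\Omega$ from $\operatorname{dom}(\Delta^{1/2})$ into $\operatorname{dom}(\Delta)$, which is what we need. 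Everything else is organizational bookkeeping --- tracking which operators belong to $\A$ versus $\A'$ and verifying the ladder identities $(\mathrm{a}_{n})_{m}=\mathrm{a}_{n+m}$, $(\mathrm{a}_{n})'=\mathrm{a}'_{n}$, $(\mathrm{b}_{-n})_{n}=\mathrm{b}$ --- all of which follow immediately from the separating property of $\Omega$ for $\A$ and $\A'$.
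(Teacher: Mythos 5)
Your proposal is correct and takes essentially the same route as the paper: your base case for $(\star)$ --- applying $S$, rewriting adjoint vectors via lemma \ref{lem:dagger-ladder}, commuting elements of $\A$ past elements of $\A'$, and then using $S^{*}S=\Delta$ --- is exactly the paper's one-step computation, and your induction on $n$ is the paper's ``iterate this process $n$ times,'' with your pair $(\mathrm{a}_j,\mathrm{c}_j)$ matching the paper's $(\mathrm{a}_j,\mathrm{b}_{-(n-j)})$ after setting $\mathrm{c}=\mathrm{b}_{-n}$. The only additions are bookkeeping the paper leaves implicit (closure of $\A_{\text{tidy}}$ under index shifts, the spectral fact about composing powers, and the mirror argument with $SS^{*}=\Delta^{-1}$ for negative $n$), all of which are fine.
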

\begin{proof}
	Fix $\mathrm{b} \in \A_{\text{tidy}}.$
	By construction of $\A_{\text{tidy}},$ the vector $\mathrm{b} \Omega$ is in the domain of $\Delta^{-n}.$
	Note that from the expression
	\begin{align}
	\begin{split}
		\mathrm{a} \Delta^{-n} \mathrm{b} \Omega
			& = \mathrm{a} \mathrm{b}_{-n} \Omega,
	\end{split}
	\end{align}
	we see that $\mathrm{a} \Delta^{-n} \mathrm{b} \Omega$ is in the domain of the Tomita operator $S$.
	Applying lemma \ref{lem:dagger-ladder} gives
	\begin{align}
		\begin{split}
		S \mathrm{a} \Delta^{-n} \mathrm{b}\Omega
			& = S \mathrm{a} \mathrm{b}_{-n} \Omega \\
			& = \mathrm{b}_{-n}^* \mathrm{a}^* \Omega \\
			& = \mathrm{b}_{-n}^* (\mathrm{a}_{1}')^* \Omega \\
			& = (\mathrm{a}_1')^* \mathrm{b}_{-n}^* \Omega \\
			& = (\mathrm{a}_1')^* (\mathrm{b}_{-(n-1)}')^* \Omega.
		\end{split}
	\end{align}
	So $S \mathrm{a} \Delta^{-n} \mathrm{b} \Omega$ is in the domain of the Tomita operator for $\A',$ which is the adjoint $S^*.$
	This tells us that $\mathrm{a} \Delta^{-n} \mathrm{b} \Omega$ is in the domain of $S^* S = \Delta,$ and gives the identity
	\begin{align}
		\begin{split}
			\Delta \mathrm{a} \Delta^{-n} \mathrm{b}\Omega
			& = S^{*} S \mathrm{a} \Delta^{-n} \mathrm{b}\Omega \\
			& = S^{*} (\mathrm{a}_1')^* (\mathrm{b}_{-(n-1)}')^* \Omega \\
			& = \mathrm{b}_{-(n-1)}' \mathrm{a}_{1}' \Omega \\
			& = \mathrm{b}_{-(n-1)}' \mathrm{a}_1 \Omega \\
			& = \mathrm{a}_1 \mathrm{b}_{-(n-1)}' \Omega \\
			& = \mathrm{a}_1 \mathrm{b}_{-(n-1)} \Omega.
		\end{split}
	\end{align}
	Iterating this process $n$ times tells us that $\mathrm{b} \Omega$ is in the domain of $\Delta^n \mathrm{a} \Delta^{-n},$ and gives the identity
	\begin{align}
		\begin{split}
			\Delta^n \mathrm{a} \Delta^{-n} \mathrm{b}\Omega
			& = \mathrm{a}_{n} \mathrm{b} \Omega.
		\end{split}
	\end{align}
\end{proof}

\begin{prop} \label{prop:core}
	For any real number  $x,$ the space $\A_{\text{tidy}} \Omega$ is a core for $\Delta^{x}.$
\end{prop}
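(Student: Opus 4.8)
The plan is to use the spectral theorem for $\Delta$ to reduce the claim to an elementary approximation inside a bounded spectral window, where $\Delta^x$ behaves like a bounded operator. Throughout, write $\Pi_{[\lambda_1,\lambda_2]} = \Theta(\lambda_2 - \Delta)\Theta(\Delta - \lambda_1)$ for the spectral projection of $\Delta$ onto $[\lambda_1,\lambda_2]$, and recall from remark \ref{rem:tidy-construction} (taking $n = 0$) that for any $0 < \lambda_1 < \lambda_2$ and any $\mathrm{a} \in \A$ the vector $\Pi_{[\lambda_1,\lambda_2]}\mathrm{a}\Omega = \mathrm{a}_{[\lambda_1,\lambda_2],0}\Omega$ lies in $\A_{\text{tidy}}\Omega$. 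Since $\Delta$ is invertible, $0$ is not an eigenvalue, so $\Pi_{[\lambda_1,\lambda_2]} \to I$ strongly as $\lambda_1 \to 0^+$ and $\lambda_2 \to \infty$; in particular $\A_{\text{tidy}}\Omega$ is dense (it approximates $\A\Omega$, which is dense since $\Omega$ is cyclic), which already disposes of the case $x = 0$.

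For general real $x$, fix $\xi \in \mathrm{dom}(\Delta^x)$; the goal is to exhibit vectors in $\A_{\text{tidy}}\Omega$ converging to $\xi$ in the graph norm $\eta \mapsto \lVert \eta \rVert + \lVert \Delta^x \eta \rVert$. First, because $\Pi_{[\lambda_1,\lambda_2]}$ commutes with $\Delta^x$ and $\Pi_{[\lambda_1,\lambda_2]} \to I$ strongly, one has $\Pi_{[\lambda_1,\lambda_2]}\xi \to \xi$ and $\Delta^x \Pi_{[\lambda_1,\lambda_2]}\xi = \Pi_{[\lambda_1,\lambda_2]}\Delta^x\xi \to \Delta^x\xi$ as $\lambda_1 \to 0^+$, $\lambda_2 \to \infty$, so it suffices to approximate each $\Pi_{[\lambda_1,\lambda_2]}\xi$ (with $0 < \lambda_1 < \lambda_2$ fixed) in graph norm by tidy vectors. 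Second, on the subspace $\Pi_{[\lambda_1,\lambda_2]}\H$ the operator $\Delta^x$ acts boundedly, with $\lVert \Delta^x \Pi_{[\lambda_1,\lambda_2]}\eta \rVert \leq C_{\lambda_1,\lambda_2,x}\,\lVert \Pi_{[\lambda_1,\lambda_2]}\eta \rVert$ for $C_{\lambda_1,\lambda_2,x} = \sup_{t \in [\lambda_1,\lambda_2]} t^x$, so on this subspace norm convergence is the same as graph-norm convergence. Third, pick $\mathrm{a}_j \in \A$ with $\mathrm{a}_j\Omega \to \xi$ (possible since $\Omega$ is cyclic); then $\Pi_{[\lambda_1,\lambda_2]}\mathrm{a}_j\Omega \to \Pi_{[\lambda_1,\lambda_2]}\xi$ in norm, hence in graph norm, and each $\Pi_{[\lambda_1,\lambda_2]}\mathrm{a}_j\Omega$ is a tidy vector. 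Combining the two approximations by a diagonal argument produces the desired sequence in $\A_{\text{tidy}}\Omega$.

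I do not expect a serious obstacle here. The only points needing care are that $0$ is not an eigenvalue of $\Delta$ — so that $\Pi_{[\lambda_1,\lambda_2]} \to I$ strongly, not merely to the projection onto $(\ker \Delta)^{\perp}$ — which is exactly the invertibility hypothesis, and the observation that $\Delta^x$ restricted to a finite spectral window $\Pi_{[\lambda_1,\lambda_2]}\H$ is genuinely bounded rather than just densely defined, which is what upgrades the norm approximation of the third step to a graph-norm approximation. Everything else is a routine application of the spectral theorem together with the density of $\A\Omega$.
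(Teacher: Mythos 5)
Your argument is correct, and it reaches the conclusion by a different mechanism than the paper. The paper argues by duality in the graph inner product: it assumes $\psi \oplus \Delta^x\psi$ is orthogonal to every $\mathrm{a}\Omega \oplus \Delta^x\mathrm{a}\Omega$ with $\mathrm{a}$ tidy, rewrites this as $\langle \psi, (1+\Delta^{2x})\mathrm{a}\Omega\rangle = 0$, and then shows $(1+\Delta^{2x})\A_{\text{tidy}}\Omega$ is dense in $\H$ because $(1+\Delta^{2x})\Theta(\lambda_2-\Delta)\Theta(\Delta-\lambda_1)$ is bounded with bounded inverse on the spectral subspace for $[\lambda_1,\lambda_2]$, and these subspaces exhaust $\H$ by invertibility of $\Delta$. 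You instead prove graph-norm density directly: truncate $\xi$ by the spectral window (graph-norm convergence because the window commutes with $\Delta^x$ and tends strongly to the identity, $0$ not being an eigenvalue), then use cyclicity of $\Omega$ together with boundedness of $\Delta^x$ on the window to upgrade the norm approximation of $\Pi_{[\lambda_1,\lambda_2]}\xi$ by the tidy vectors $\Pi_{[\lambda_1,\lambda_2]}\mathrm{a}_j\Omega$ to a graph-norm approximation, finishing with a diagonal argument. Both proofs rest on exactly the same two ingredients --- exhaustion of $\H$ by bounded spectral windows of the invertible $\Delta$, and cyclicity of $\Omega$ --- so the difference is one of packaging: your direct approximation avoids the orthogonal-complement step at the cost of the two-scale/diagonal bookkeeping, while the paper's annihilator argument concentrates all the work into the single density statement for $(1+\Delta^{2x})\A_{\text{tidy}}\Omega$. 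One cosmetic caveat: with the convention $\Theta(0)=\tfrac12$, the operator $\Theta(\lambda_2-\Delta)\Theta(\Delta-\lambda_1)$ is an honest projection only when $\lambda_1,\lambda_2$ are not eigenvalues of $\Delta$, but nothing in your argument uses idempotence --- only that its norm is at most $1$, that it commutes with $\Delta^x$, that it converges strongly to the identity as $\lambda_1\to 0^+$, $\lambda_2\to\infty$, and that $t^x$ is bounded on $[\lambda_1,\lambda_2]$ --- so the proof stands as written (the paper is equally loose on this point).
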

\begin{proof}
	To show that $\A_{\text{tidy}} \Omega$ is a core for $\Delta^{x},$ we must show that vectors of the form $\mathrm{a} \Omega \oplus \Delta^x \mathrm{a} \Omega,$ for $\mathrm{a}\in \A_{\text{tidy}},$ are dense in the graph of $\Delta^{x}.$
	Suppose that $\psi$ is a vector in the domain of $\Delta^{x}$ such that $\psi \oplus \Delta^{x} \psi$ is orthogonal to all such vectors.
	I.e., suppose that for all $\mathrm{a} \in \A_{\text{tidy}},$ we have
	\begin{align}
		\begin{split}
			0
			& = \langle \psi \oplus \Delta^{x} \psi, \mathrm{a} \Omega \oplus \Delta^x \mathrm{a} \Omega \rangle \\
			& = \langle \psi, \mathrm{a} \Omega \rangle + \langle \Delta^{x} \psi, \Delta^x \mathrm{a} \Omega \rangle \\
			& = \langle \psi, (1 + \Delta^{2x}) \mathrm{a} \Omega \rangle.
		\end{split}
	\end{align}
	Our goal is to show that whenever this expression is satisfied, $\psi$ vanishes.
	It suffices to show that the space $(1 + \Delta^{2x}) \A_{\text{tidy}} \Omega$ is dense in $\H.$
	
	To see this, note that per the construction of $\A_{\text{tidy}}$ from remark \ref{rem:tidy-construction}, each $\mathrm{a} \in \A_{\text{tidy}}$ satisfies an equation like
	\begin{equation}
		\mathrm{a} \Omega
		= \Theta(\lambda_2 - \Delta) \Theta(\Delta - \lambda_1) \mathrm{b} \Omega
	\end{equation}
	for some $0 < \lambda_1 < \lambda_2$ and some $\mathrm{b} \in \A.$
	The operator
	\begin{equation}
		(1 + \Delta^{2x}) \Theta(\lambda_2 - \Delta) \Theta(\Delta - \lambda_1)
	\end{equation}
	is bounded, injective, and invertible on the spectral subspace of $\Delta$ corresponding to the range $[\lambda_1, \lambda_2].$
	It therefore maps any dense subset of $\H$ into a dense subspace of that spectral subspace; in particular, the space
	\begin{equation}
		(1 + \Delta^{2x}) \Theta(\lambda_2 - \Delta) \Theta(\Delta - \lambda_1) \mathcal{A} \Omega
	\end{equation}
	is dense in the spectral subspace of $\Delta$ for the range $[\lambda_1, \lambda_2].$
	For any $0 < \lambda_1 < \lambda_2,$ this is a subspace of $(1 + \Delta^{2x}) \A_{\text{tidy}} \Omega.$
	Taking $\lambda_1 \to 0$ and $\lambda_2 \to \infty$ shows that $(1 + \Delta^{2x})\A_{\text{tidy}} \Omega$ is dense in the spectral subspace of $\Delta$ for the range $(0, \infty)$, which by invertibility of $\Delta$ is equal to all of $\H.$
\end{proof}

\begin{theorem}[Tomita's theorem for tidy operators] \label{thm:tomita}
	For any $\mathrm{a} \in \A_{\text{tidy}}$ and any $t \in \reals,$ the operator $\Delta^{-it} \mathrm{a} \Delta^{it}$ is in $\A.$
\end{theorem}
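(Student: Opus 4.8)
The plan is to deduce the statement from von Neumann's bicommutant theorem: fixing $\mathrm{a}\in\A_{\text{tidy}}$ and $t\in\reals$, I will show that the bounded operator $\Delta^{-it}\mathrm{a}\Delta^{it}$ commutes with every $\mathrm{b}'\in\A'$, so that $\Delta^{-it}\mathrm{a}\Delta^{it}\in(\A')'=\A$. To get at the commutator I will analytically continue modular flow into the right half-plane, bound the growth of the continuation, and then apply Carlson's theorem to propagate the obvious vanishing of the commutator at the nonnegative integers down to the imaginary axis.

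First I would construct the continuation. Apply Lemma \ref{lem:operator-continuation} with $P=\Delta$, $\mathrm{x}=\mathrm{a}$, and $w=N$ a positive integer. Its hypothesis holds: by Proposition \ref{prop:powers} the operator $\Delta^{-N}\mathrm{a}\Delta^{N}$ agrees on $\A_{\text{tidy}}\Omega$ with the bounded operator $\mathrm{a}_{-N}\in\A$, and by Proposition \ref{prop:core} the subspace $\A_{\text{tidy}}\Omega$ is a core for $\Delta^{N}$. The lemma then gives, for each $N$, a bounded-operator-valued function $F(z)=\overline{\Delta^{-z}\mathrm{a}\Delta^{z}}$ on the strip $0\le\Re(z)\le N$ that is norm-analytic in the open strip and strongly continuous on its boundary. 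These determinations agree on overlaps, so they assemble into a single function $F$ which is norm-analytic on $\{\Re(z)>0\}$ (every such point lies in the interior of some strip) and strongly continuous on $\{\Re(z)=0\}$. At a nonnegative integer $n$, Propositions \ref{prop:powers} and \ref{prop:core} identify the closure $F(n)=\overline{\Delta^{-n}\mathrm{a}\Delta^{n}}$ with $\mathrm{a}_{-n}\in\A$ (with $F(0)=\mathrm{a}$), while on the imaginary axis $F(iy)=\Delta^{-iy}\mathrm{a}\Delta^{iy}$ is exactly modular flow.

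Next I would control the growth of $F$. The key point is that $\Delta^{\pm iy}$ is unitary and commutes in the functional calculus with $\Delta^{\pm\sigma}$, so for $\sigma\ge 0$ one has $F(\sigma+iy)=\Delta^{-iy}F(\sigma)\Delta^{iy}$ and hence $\lVert F(\sigma+iy)\rVert=\lVert F(\sigma)\rVert$, independent of $y$: the continuation grows only in the real direction. Each $\lVert F(\sigma)\rVert$ is finite by Lemma \ref{lem:operator-continuation}, and standard arguments — the uniform boundedness principle near the edge $\Re(z)=0$, where $F$ is only strongly continuous but has finite-norm boundary values, together with norm-analyticity in the interior — show that $F$ is norm-bounded on each closed unit-width strip $n\le\Re(z)\le n+1$. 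Applying the Hadamard three-lines theorem to the scalar functions $z\mapsto\langle\phi,F(z)\psi\rangle$ and optimizing over unit vectors $\phi,\psi$ then yields $\lVert F(z)\rVert\le\max\!\left(\lVert\mathrm{a}_{-n}\rVert,\lVert\mathrm{a}_{-n-1}\rVert\right)$ for $n\le\Re(z)\le n+1$. By Remark \ref{rem:tidy-construction} the norms $\lVert\mathrm{a}_{-n}\rVert$ grow at most exponentially in $n$, so there are constants $\alpha,\beta>0$ with $\lVert F(z)\rVert\le\alpha\,e^{\beta\Re(z)}$ for all $\Re(z)\ge 0$; in particular $F$ is of exponential type on the half-plane, while $\lVert F(iy)\rVert=\lVert\mathrm{a}\rVert$ stays bounded on the imaginary axis.

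Finally, fix $\mathrm{b}'\in\A'$ and $\phi,\psi\in\H$, and set $g(z)=\langle\phi,[F(z),\mathrm{b}']\psi\rangle$. This is holomorphic on $\{\Re(z)>0\}$, continuous up to the boundary, of exponential type (from $\lVert F(z)\rVert\le\alpha e^{\beta\Re(z)}\le\alpha e^{\beta|z|}$), and bounded on the imaginary axis; moreover $g(n)=\langle\phi,[\mathrm{a}_{-n},\mathrm{b}']\psi\rangle=0$ for every nonnegative integer $n$, since $\mathrm{a}_{-n}\in\A$ commutes with $\mathrm{b}'\in\A'$. The exponential rate of $g$ on the imaginary axis is $0$, well below the threshold $\pi$, so Carlson's theorem forces $g\equiv 0$; evaluating at $z=it$ gives $\langle\phi,[\Delta^{-it}\mathrm{a}\Delta^{it},\mathrm{b}']\psi\rangle=0$, and as $\phi,\psi\in\H$ and $\mathrm{b}'\in\A'$ were arbitrary we conclude $\Delta^{-it}\mathrm{a}\Delta^{it}\in\A$. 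I expect the only genuine obstacle to be the bookkeeping in the growth estimate — justifying the maximum principle when the continuation is merely strongly continuous at $\Re(z)=0$, and organizing the strip-by-strip bounds — while the remaining ingredients (the bicommutant reduction, the identification of $F(n)$ with $\mathrm{a}_{-n}$, and the verification of Carlson's hypotheses) follow directly from the stated background results.
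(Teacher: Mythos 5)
Your proposal is correct and follows essentially the same route as the paper: reduce to vanishing commutators via the bicommutant theorem, build the half-plane continuation $\overline{\Delta^{-z}\mathrm{a}\Delta^{z}}$ from Lemma \ref{lem:operator-continuation} together with Propositions \ref{prop:powers} and \ref{prop:core}, note it equals $\mathrm{a}_{-n}\in\A$ at nonnegative integers, control growth by the exponential bound of Remark \ref{rem:tidy-construction} plus Phragm\'{e}n--Lindel\"{o}f, and conclude with Carlson's theorem at $z=it$. Your strip-by-strip three-lines argument and the identity $\lVert F(\sigma+iy)\rVert=\lVert F(\sigma)\rVert$ are just a more explicit rendering of the growth estimate the paper invokes tersely, so there is no substantive difference.
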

\begin{proof}
	Fix $\mathrm{b}' \in \A'.$
	By von Neumann's bicommutant theorem, it suffices to show
	\begin{equation}
		[\Delta^{-it} \mathrm{a} \Delta^{it}, \mathrm{b}'] = 0.
	\end{equation}
	
	Fix any integer $n \geq 0.$
	By proposition \ref{prop:powers}, the operator $\Delta^{-n} \mathrm{a} \Delta^{n}$ is defined on the dense subspace $\A_{\text{tidy}} \Omega,$ and is equal to $\mathrm{a}_{-n}$ on that subspace.
	By proposition \ref{prop:core}, $\A_{\text{tidy}} \Omega$ is a core for $\Delta^{n}.$
	Combining these observations with lemma \ref{lem:operator-continuation}, we see that for any complex $z$ in the strip with $0 \leq \Re(z) \leq n,$ the operator $\Delta^{-z} \mathrm{a} \Delta^{z}$ is bounded on its domain, and the map
	\begin{equation}
		F_{\mathrm{a}}(z) = \overline{\Delta^{-z} \mathrm{a} \Delta^z}
	\end{equation}
	is holomorphic in the interior of the strip and strongly continuous on the strip's boundary.
	Since this is true for any nonnegative integer $n$, it is true as a statement about the entire right half-plane.
	
	The function
	\begin{equation}
		F_{\mathrm{a} \mathrm{b}'}(z) = [\overline{\Delta^{-z} \mathrm{a} \Delta^z}, \mathrm{b}']
	\end{equation}
	is holomorphic in the right half-plane and strongly continuous on the imaginary axis, with norm bounded by
	\begin{equation} \label{eq:last-bound}
		\lVert F_{\mathrm{a} \mathrm{b}'}(z) \rVert
			\leq 2 \lVert \overline{\Delta^{-\Re(z)} \mathrm{a} \Delta^{\Re(z)}} \rVert  \lVert \mathrm{b}' \rVert.
	\end{equation}
	Recall also that for any integer $n,$ we have
	\begin{equation}
		\overline{\Delta^{-n} \mathrm{a} \Delta^n}
			= \mathrm{a}_{-n}.
	\end{equation}
	From this we observe that $F_{\mathrm{a} \mathrm{b}'}(n)$ vanishes for any nonnegative integer $n.$
	We also know by the Phragm\'{e}n-Lindel\"{o}f principle, inequality \eqref{eq:last-bound}, and remark \ref{rem:tidy-construction} that the norm of $F_{\mathrm{a} \mathrm{b}'}$ is bounded in imaginary directions, and bounded by an exponential function along the real axis.
	Applying Carlson's theorem gives $F_{\mathrm{a} \mathrm{b}'}(z) = 0,$ and putting $z=it$ proves the theorem.
\end{proof}

\begin{prop} \label{prop:density}
	We have $\A_{\text{tidy}}' = \A'.$
\end{prop}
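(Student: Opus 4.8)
The plan is to prove the two inclusions $\A_{\text{tidy}}'' \subseteq \A$ and $\A \subseteq \A_{\text{tidy}}''$ separately. One direction is immediate: since $\A_{\text{tidy}} \subseteq \A$ by construction, taking commutants twice gives $\A_{\text{tidy}}'' \subseteq \A'' = \A$. The content is the reverse inclusion, and the key move is to compare the Tomita operator of $\A$ with that of the von Neumann algebra $\M := \A_{\text{tidy}}''$ --- rather than trying to approximate a general $\mathrm{a} \in \A$ by tidy operators in an operator topology, which cannot work, since the norm bounds in theorem \ref{thm:bounded-construction} and remark \ref{rem:tidy-construction} blow up as the spectral window is enlarged.

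By proposition \ref{prop:core} (with $x = 1/2$), the subspace $\A_{\text{tidy}}\Omega$ is a core for $\Delta^{1/2}$, and in particular dense in $\H$; so $\Omega$ is cyclic for $\M$, and it is separating for $\M$ because $\M \subseteq \A$. Thus $\M$ carries a Tomita operator $S_\M$, and lemma \ref{lem:modular-domain} applies to the pair $(\M, \Omega)$. I claim that $S \subseteq S_\M$, where $S$ is the Tomita operator of $\A$. Indeed both operators send $\mathrm{a}\Omega \mapsto \mathrm{a}^*\Omega$ for every $\mathrm{a} \in \A_{\text{tidy}}$, using $\A_{\text{tidy}}\Omega \subseteq \M\Omega$ and $\A_{\text{tidy}}\Omega \subseteq \A\Omega$; hence $S_\M$, being closed, extends the closure of $S$ restricted to $\A_{\text{tidy}}\Omega$. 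But that closure is $S$ itself: since $S = J\Delta^{1/2}$ with $J$ antiunitary and $\mathrm{dom}(S) = \mathrm{dom}(\Delta^{1/2})$, the graph norm of $S$ coincides with that of $\Delta^{1/2}$, so a subspace is a core for $S$ exactly when it is a core for $\Delta^{1/2}$ --- which is what proposition \ref{prop:core} provides. (Trivially $S_\M \subseteq S$ as well, so in fact $S_\M = S$, but only this one inclusion is needed below.)

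Finally I would read off $\A \subseteq \M$ from lemma \ref{lem:modular-domain}. Given $\mathrm{a} \in \A$, we have $\mathrm{a}\Omega \in \mathrm{dom}(S) \subseteq \mathrm{dom}(S_\M)$ with $S_\M(\mathrm{a}\Omega) = \mathrm{a}^*\Omega$, so lemma \ref{lem:modular-domain} applied to $\M$ furnishes a closed operator $\mathrm{T}$ affiliated with $\M$, having $\M'\Omega$ as a core and with $\Omega$ in the domains of $\mathrm{T}$ and $\mathrm{T}^*$, such that $\mathrm{T}\Omega = \mathrm{a}\Omega$. Since $\M \subseteq \A$ gives $\A' \subseteq \M'$, we get $\A'\Omega \subseteq \M'\Omega \subseteq \mathrm{dom}(\mathrm{T})$, and affiliation then yields $\mathrm{T}(\mathrm{b}'\Omega) = \mathrm{b}'\mathrm{T}\Omega = \mathrm{b}'\mathrm{a}\Omega = \mathrm{a}(\mathrm{b}'\Omega)$ for all $\mathrm{b}' \in \A'$. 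Thus the closed operator $\mathrm{T}$ agrees with the bounded operator $\mathrm{a}$ on the dense subspace $\A'\Omega$, which forces $\mathrm{a} \subseteq \mathrm{T}$ and hence $\mathrm{T} = \mathrm{a}$, as $\mathrm{a}$ is everywhere defined. So $\mathrm{a}$ is affiliated with $\M$ and, being bounded, lies in $\M = \A_{\text{tidy}}''$. Combining the two inclusions proves the proposition.

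The one point I expect to need real care is the justification that $\A_{\text{tidy}}\Omega$ is a core for $S$ --- more precisely, the passage from the core property of $\Delta^{1/2}$ supplied by proposition \ref{prop:core} to that of $S$, where one must track the antilinearity of $S$ in comparing graph norms; everything else is bookkeeping with commutants and lemma \ref{lem:modular-domain}. A variant that avoids this point is to note instead that the Tomita operator of $\A'$ is contained in that of $\A_{\text{tidy}}'$ --- which follows at once from $\A' \subseteq \A_{\text{tidy}}'$ --- and to obtain $S \subseteq S_\M$ by taking adjoints.
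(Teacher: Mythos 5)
Your route is genuinely different from the paper's, and most of it is sound: the easy inclusion, the passage from proposition \ref{prop:core} to the core property for $S$ via the equality of graph norms, and the identification of the affiliated operator $\mathrm{T}$ with $\mathrm{a}$ on the dense subspace $\A' \Omega$ are all fine. But there is a genuine gap at the very first move: you declare $\M := \A_{\text{tidy}}''$ to be a von Neumann algebra with cyclic-separating vector $\Omega$ and then invoke the Tomita operator $S_\M$ and lemma \ref{lem:modular-domain} for the pair $(\M, \Omega)$. This requires $\M$ to be closed under adjoints, which is not automatic and is proved nowhere in the paper or in your argument: $\A_{\text{tidy}}$ is defined as a set of operators $\mathrm{a}_{[\lambda_1,\lambda_2],0}$ with no stated $*$-symmetry, and the double commutant of a non-self-adjoint set need not be a $*$-algebra (the algebra generated by a single Jordan block is its own double commutant and is not self-adjoint). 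Without self-adjointness of $\M$, neither the preclosedness of $\mathrm{m}\Omega \mapsto \mathrm{m}^*\Omega$ nor lemma \ref{lem:modular-domain} is available for $\M$. The gap is fixable but needs real input: one can show $\A_{\text{tidy}}^* = \A_{\text{tidy}}$ by noting that for tidy $\mathrm{a}$ with window $[\lambda_1,\lambda_2]$ one has $\mathrm{a}^*\Omega = J \Delta^{1/2} \mathrm{a}\Omega$, that $J \Delta J^{-1} = \Delta^{-1}$ places this vector in the spectral window $[\lambda_2^{-1}, \lambda_1^{-1}]$ of $\Delta$, and that remark \ref{rem:tidy-construction} applied to $\mathrm{a}^* \in \A$ together with the separating property of $\Omega$ then identifies $\mathrm{a}^*$ with a tidy operator --- but the relation $J\Delta J^{-1} = \Delta^{-1}$ is outside the background results the paper quotes, so you would have to supply it.

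Two smaller remarks. First, your closing ``variant'' is backwards: from $\A' \subseteq \A_{\text{tidy}}'$ you get that the Tomita operator of $\A'$ is contained in that of $\M'$, i.e.\ $S^* \subseteq S_\M^*$, and taking adjoints \emph{reverses} the inclusion, yielding only the trivial $S_\M \subseteq S$, not the needed $S \subseteq S_\M$; so the core argument cannot be bypassed that way. Second, for comparison, the paper's own proof sidesteps the self-adjointness issue entirely: it never applies Tomita machinery to $\A_{\text{tidy}}''$, but uses the same core property only to produce, for each $\mathrm{a} \in \A$, tidy operators $\mathrm{a}_n$ with $\mathrm{a}_n \Omega \to \mathrm{a}\Omega$ and $\mathrm{a}_n^*\Omega \to \mathrm{a}^*\Omega$, and then verifies $\langle [O, \mathrm{a}]\, \mathrm{b}'\Omega, \mathrm{c}'\Omega\rangle = 0$ for $O \in \A_{\text{tidy}}'$ and $\mathrm{b}', \mathrm{c}' \in \A'$, giving $\A_{\text{tidy}}' \subseteq \A'$ directly. (This also shows your parenthetical claim that approximation by tidy operators ``cannot work'' is too strong: the approximation needed is only in this weak, vector sense, not in an operator topology.)
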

\begin{proof}
	The inclusion $\A_{\text{tidy}} \subseteq \A$ gives $\A_{\text{tidy}}' \supseteq \A'.$
	We must show the reverse inclusion, $\A_{\text{tidy}}' \subseteq \A'.$
	
	To see this, recall that by proposition \ref{prop:core}, $\A_{\text{tidy}} \Omega$ is a core for $\Delta^{1/2},$ and hence for the Tomita operator $S$.
	Consequently, for any $\mathrm{a} \in \A,$ there exists a sequence of tidy operators $\mathrm{a}_n$ satisfying both
	\begin{equation}
		\lim_{n \to \infty} \mathrm{a}_n \Omega = \mathrm{a} \Omega
	\end{equation}
	and
	\begin{equation}
		\lim_{n \to \infty} \mathrm{a}_n^{*} \Omega = \mathrm{a}^{*} \Omega.
	\end{equation}
	Clearly, we also have for any $\mathrm{b}' \in \A'$ the limits
	\begin{equation}
		\lim_{n \to \infty} \mathrm{a}_n \mathrm{b}' \Omega = \mathrm{a} \mathrm{b}' \Omega
	\end{equation}
	and
	\begin{equation}
		\lim_{n \to \infty} \mathrm{a}_n^{*} \mathrm{b}' \Omega = \mathrm{a}^{*} \mathrm{b}' \Omega.
	\end{equation}

	Now, suppose $O$ is an operator in $\A_{\text{tidy}}',$ $\mathrm{a}$ is an operator in $\A,$ and $\mathrm{a}_n$ is a sequence of tidy operators converging as in the above equations.
	Fix $\mathrm{b}', \mathrm{c}' \in \A'.$
	We have
	\begin{align}
		\begin{split}
			\langle [O, \mathrm{a}] \mathrm{b}' \Omega, \mathrm{c}' \Omega \rangle
				& = \langle \mathrm{a} \mathrm{b}' \Omega, O^{*} \mathrm{c}' \Omega \rangle
						- \langle O \mathrm{b}' \Omega, \mathrm{a}^* \mathrm{c}' \Omega \rangle \\
				& = \lim_{n \to \infty} \left( \langle \mathrm{a}_n \mathrm{b}' \Omega, O^{*} \mathrm{c}' \Omega \rangle
				- \langle O \mathrm{b}' \Omega, \mathrm{a}_n^* \mathrm{c}' \Omega \rangle \right) \\
				& = \lim_{n \to \infty} \langle [O, \mathrm{a}_n] \mathrm{b}' \Omega, \mathrm{c}' \Omega \rangle \\
				& = 0.
		\end{split}
	\end{align}
	Since $\A' \Omega$ is dense in $\H,$ this implies that the commutator $[O, \mathrm{a}]$ vanishes.
	So every element of $\A_{\text{tidy}}'$ commutes with every element of $\A,$ as desired.
\end{proof}

\begin{corollary}[Tomita's theorem] \label{cor:tomitas-theorem}
	For any $\mathrm{a} \in \A$ and any $t \in \mathbb{R}$, the operator $\Delta^{-it} \mathrm{a} \Delta^{it}$ is in $\A.$
\end{corollary}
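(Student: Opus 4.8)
The plan is to bootstrap from the tidy operators to all of $\A$ using the elementary behavior of commutants under conjugation by a fixed unitary. Since $\Delta^{it}$ is unitary, the modular flow $\alpha_t : \mathrm{x} \mapsto \Delta^{-it} \mathrm{x} \Delta^{it}$ is a $*$-automorphism of $\B(\H).$ The first step is to observe that for any subset $\mathcal{S} \subseteq \B(\H)$ one has the identity $\alpha_t(\mathcal{S})' = \alpha_t(\mathcal{S}'),$ which is immediate from the equivalence: $\mathrm{y}$ commutes with $\Delta^{-it} \mathrm{s} \Delta^{it}$ for all $\mathrm{s} \in \mathcal{S}$ if and only if $\Delta^{it} \mathrm{y} \Delta^{-it}$ commutes with every $\mathrm{s} \in \mathcal{S}.$ Iterating once gives $\alpha_t(\mathcal{S})'' = \alpha_t(\mathcal{S}'').$

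The second step is to apply this identity with $\mathcal{S} = \A_{\text{tidy}}.$ By theorem \ref{thm:tomita}, we have $\alpha_t(\A_{\text{tidy}}) \subseteq \A,$ hence $\alpha_t(\A_{\text{tidy}})'' \subseteq \A'' = \A.$ By proposition \ref{prop:density}, $\A_{\text{tidy}}'' = \A,$ so the left-hand side is exactly $\alpha_t(\A).$ This yields $\alpha_t(\A) \subseteq \A,$ that is, $\Delta^{-it} \mathrm{a} \Delta^{it} \in \A$ for every $\mathrm{a} \in \A$ and every $t \in \reals,$ as claimed. (Applying the same inclusion with $-t$ in place of $t$ upgrades this to the equality $\alpha_t(\A) = \A.$)

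There is no real obstacle remaining: all of the analytic content has been placed in theorem \ref{thm:tomita} and proposition \ref{prop:density}, and the only point requiring any care is the direction of the commutant identity under a fixed unitary, which is a one-line check. One could instead argue topologically --- $\alpha_t$ is a weak-operator-continuous automorphism of $\B(\H),$ so it carries the von Neumann algebra generated by $\A_{\text{tidy}}$ onto the one generated by $\alpha_t(\A_{\text{tidy}})$ --- but the commutant bookkeeping above sidesteps the need to invoke continuity at all.
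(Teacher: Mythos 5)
Your proof is correct, and it takes a genuinely different route from the paper's. The paper tests $\Delta^{-it}\mathrm{a}\Delta^{it}$ against the \emph{tidy operators of the commutant}: it invokes theorem \ref{thm:tomita} in its symmetric form for $\A'$ (with modular operator $\Delta^{-1}$) to get $\mathrm{b}'(t)=\Delta^{it}\mathrm{b}'\Delta^{-it}\in\A'$, shows the commutator $[\Delta^{-it}\mathrm{a}\Delta^{it},\mathrm{b}']$ vanishes for every tidy $\mathrm{b}'\in\A'$, and then uses proposition \ref{prop:density} applied to $\A'$ (i.e.\ $(\A'_{\text{tidy}})''=\A'$, hence $(\A'_{\text{tidy}})'=\A$) to conclude membership in $\A$. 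You instead stay entirely on the $\A$ side: the elementary identity $\bigl(U\mathcal{S}U^{*}\bigr)'=U\mathcal{S}'U^{*}$ for a fixed unitary $U=\Delta^{-it}$ and an arbitrary set $\mathcal{S}$, iterated once, converts theorem \ref{thm:tomita} ($\alpha_t(\A_{\text{tidy}})\subseteq\A$, so $\alpha_t(\A_{\text{tidy}})''\subseteq\A''=\A$ by monotonicity of the double commutant) together with proposition \ref{prop:density} ($\A_{\text{tidy}}''=\A$) directly into $\alpha_t(\A)=\alpha_t(\A_{\text{tidy}}'')=\alpha_t(\A_{\text{tidy}})''\subseteq\A$. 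What your route buys is economy of hypotheses: you never need the symmetric ($\A\leftrightarrow\A'$, $\Delta\leftrightarrow\Delta^{-1}$) versions of the tidy-operator results, only the two statements as literally proved for $\A$, at the cost of the (one-line, and correctly checked) commutant--conjugation identity. The paper's version, by contrast, makes the commutation with $\A'$ explicit, which is perhaps more in the spirit of the bicommutant-theorem strategy announced in the introduction; mathematically the two arguments are equally valid, and yours is arguably the tidier piece of bookkeeping.
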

\begin{proof}
	Let $\mathrm{b}'$ be a tidy operator for the commutant algebra $\A',$ and fix arbitrary $\psi, \xi \in \H.$
	Consider the commutator
	\begin{align}
	\begin{split}
		\langle [\Delta^{-it} \mathrm{a} \Delta^{it}, \mathrm{b}'] \psi, \xi \rangle 
			& = \langle \Delta^{-it} \mathrm{a} \Delta^{it} \mathrm{b}' \psi, \xi \rangle 
				- \langle \mathrm{b}' \Delta^{-it} \mathrm{a} \Delta^{it} \psi, \xi \rangle 
	\end{split}
	\end{align}
	By theorem \ref{thm:tomita} applied to tidy operators of $\A',$ the operator
	\begin{equation}
		\mathrm{b}'(t) \equiv \Delta^{it} \mathrm{b}' \Delta^{-it}
	\end{equation}
	is in $\A'.$
	This gives
	\begin{align}
		\begin{split}
			\langle [\Delta^{-it} \mathrm{a} \Delta^{it}, \mathrm{b}'] \psi, \xi \rangle 
			& = \langle \mathrm{a} \mathrm{b}'(t) \Delta^{it} \psi, \Delta^{it} \xi \rangle 
			- \langle \mathrm{b}'(t) \mathrm{a} \Delta^{it} \psi, \Delta^{it} \xi \rangle \\
			& = \langle [\mathrm{a}, \mathrm{b}'(t)] \Delta^{it} \psi, \Delta^{it} \xi \rangle \\
			& = 0.
		\end{split}
	\end{align}
	So $\Delta^{-it} \mathrm{a} \Delta^{it}$ commutes with every tidy operator in $\A'.$
	By proposition \ref{prop:density} (exchanging the role of $\A$ and $\A'$ in the statement of the proposition), it is in $\A.$
\end{proof}

\section*{Acknowledgments}
\noindent
I am grateful to Brent Nelson for comments on the first draft of this paper, and in particular for noticing an error that led to the addition of proposition \ref{prop:density}.
This work was supported by the AFOSR under award number FA9550-19-1-0360, by the DOE Early Career Award, and by the Templeton Foundation via the Black Hole Initiative.

\bibliographystyle{elsarticle-num-names}
\bibliography{bibliography}

\end{document}